\newtheorem{theorem}{Theorem}[section]
\newtheorem{corollary}[theorem]{Corollary}
\newtheorem{lemma}[theorem]{Lemma}
\newtheorem{proposition}[theorem]{Proposition}
\newcommand{\rev}[1]{\textcolor{black}{{#1}}}
\def\d{\displaystyle}
\def\beq{\begin{equation}}
\def\eeq{\end{equation}}
\def\bit{\begin{itemize}}
\def\eit{\end{itemize}}
\def\barr{\begin{array}}
\def\earr{\end{array}}
\def\R{\mathbb{R}}
\def\N{\mathbb{N}}
\def\a{\mathbf{a}}
\def\u{\mathbf{u}}
\def\b{\mathbf{b}}
\def\dt{\Delta t}
\def\unu{u^{n+1}}
\def\uhnu{u_{h}^{n+1}}
\def\utnu{\tilde{u}^{n+1}}
\def\uhn{u_{h}^{n}}
\def\vh{v_h}
\def\vt{\tilde{v}}
\newcommand\redout{\bgroup\markoverwith{\textcolor{red}{\rule[.5ex]{2pt}{0.8pt}}}\ULon}
\def\trh{{\cal T}_{h}}
\def\disp{\displaystyle}
\def\hu{U}
\title{Spectral Variational Multi-Scale method for parabolic problems. Application to 1D transient advection-diffusion equations}
\author{Tom\'as Chac\'on Rebollo\footnotemark[1],  Soledad Fern\'andez-Garc\'ia\footnotemark[1],  \\
David Moreno-Lopez\footnotemark[1],
Isabel S\'anchez Muñoz \footnotemark[2].  }
\begin{document}

\footnotetext[1]
{Dpto. EDAN \& IMUS, University of Seville, Campus de Reina Mercedes, 41012 Sevilla (Spain), e-mail: chacon@us.es,  soledad@us.es,  amorenol@us.es}
\footnotetext[2]
{Dpto. Matem\'atica Aplicada I, University of Seville, Ctra de Utrera s/n, 41013 Sevilla (Spain),
email: isanchez@us.es}
\maketitle

\begin{abstract}
In this work, we introduce a Variational Multi-Scale (VMS) method for the numerical approximation of parabolic problems, where sub-grid scales are approximated from the eigenpairs of associated elliptic operator. The abstract method is particularized 
to the one-dimensional advection-diffusion equations, for which the sub-grid
components are exactly calculated in terms of a spectral expansion when the advection velocity is approximated by piecewise constant velocities on the grid elements.

We prove error estimates that in particular imply that when Lagrange finite element discretisations in space are used, the spectral VMS method coincides with the exact solution of the implicit Euler semi-discretisation of the advection-diffusion problem at the Lagrange interpolation nodes. We also build a feasible method to solve the evolutive advection-diffusion problems by means of an offline/online strategy with reduced computational complexity.

We perform some numerical tests in good agreement with the theoretical expectations, that show an improved accuracy with respect to several stabilised methods.

\end{abstract}
%%%%%%%%
\textit{Keywords}:
Variational Multi-Scale; Parabolic Problems; Transtient Advection-diffusion; Stabilized Method; Spectral Approximation

\section{Introduction}
The Variational Multi-Scale is a general methodology to deal with the instabilities arising in the Galerkin discretisation of PDEs (Partial Differential Equations) with terms of different derivation orders (see Hughes (cf. \cite{Hughes_1995,HughesStewart_1995,HughesFMQ_1998})).

%The VMS methods fit into the more general class of stabilised method, of which the Streamline Upwind Petrov-Galerkin method adds to the  Galerkin formulation an additional term to control the advection derivative, (see \cite{Brooks_1982}). The introduction of SUPG method leaded to several classes of stabilised methods (Galerkin-Least Squares methods, adjoint Galerkin-Least Squares methods, among others) whose strategy consisted in providing additional terms to the Galerkin formulation in order to control the low-order terms within the PDEs. These methods have largely improved the quality of the numerical solution of incompressible and subsequently compressible flow equations, also providing a stabilisation procedure for the discretisation of the pressure gradient. An overview of those methods may be found in \cite{Hughes_1995}.

The VMS formulation is based upon the formulation of the Galerkin method as two variational problems, one satisfied by the resolved and another satisfied by the sub-grid scales of the solution. To build a feasible VMS method, the sub-grid scales problem is approximately solved by some analytic or computational procedure. In particular, an element-wise diagonalisation of the PDE operator leads to the Adjoint Stabilised Method, as well as to the Orthogonal Sub-Scales (OSS) method, introduced by Codina in \cite{codina12}.  Within these methods, the effects of the sub-grid scales is modelled by means of a dissipative interaction of operator terms acting on the resolved scales. The VMS methods have been successfully applied to many flow problems, and in particular to Large Eddy Simulation (LES) models of turbulent flows (cf. \cite{hug112,john0612,Chaconlibro}).

The application of VMS method to evolution PDEs dates back to the 1990s, when the results from \cite{Hughes_1995} were extended to nonsymmetric linear evolution operators, see \cite{HughesStewart_1995}.  The papers \cite{harari,hauke} deal with the spurious oscillations generated in the Galerkin method for parabolic problems due to very small time steps.  The series of articles \cite{fourier1,fourier2,fourier3} deal with transient Galerkin and SUPG methods, transient subgrid scale (SGS) stabilized methods and transient subgrid scale/gradient subgrid scale (SGS/GSGS), making a Fourier analysis for the one-dimensional advection-diffusion-reaction equation.

A stabilised finite element method for the transient Navier-Stokes equations based on the decomposition of the unknowns into resolvable and subgrid scales is considered in \cite{codina2002,codina2007}. Further, \cite{asensio} compares the Rothe method with the so-called Method of Lines, which consists on first, discretise in space by means of a stabilized finite element method, and then use a finite difference scheme to approximate the solution.

More recently, \cite{ChaconDia} introduced the use of spectral techniques to model the sub-grid scales for 1D steady advection-diffusion equations. The basic observation is that the eigenpairs of the advection-diffusion operator may be calculated analitycally on each grid element. A feasible VMS-spectral discretization is then built by truncation of this spectral expansion to a finite number of modes. An enhanced accuracy with respect to preceding VMS methods is achieved.

In \cite{solfergar}, the spectral VMS method is extended to 2D steady advection-diffusion problems. It is cast for low-order elements as a standard VMS method with specific stabilised coefficients, that are anisotropic in the sense that they depend on two grid P\'eclet numbers. To reduce the computing time, the stabilised coefficients are pre-computed at the nodes of a grid in an off-line step, and then interpolated by a fast procedure in the on-line computation.

The present paper deals with the building of the spectral VMS numerical approximation to evolution advection-diffusion equations.  We construct an abstract spectral VMS discretisation of parabolic equations, that is particularised to 1D advection-diffusion equations. The sub-grid components are exactly calculated in terms of spectral expansions when the driving velocity is approximated by piecewise constant velocities on the grid elements. We prove error estimates that in particular imply that when Lagrange finite element discretisations in space are used, the solution provided by the spectral VMS method coincides with the exact solution of the implicit Euler semi-discretisation at the Lagrange interpolation nodes. We also build a feasible method to solve the evolutive advection-diffusion problem by means of an offline/online strategy that pre-computes the action of the sub-grid scales on the resolved scales. This allows to dramatically reduce the computing times required by the method. We further perform some numerical tests for strongly advection dominated flows. The spectral VMS method is found to satisfy the discrete maximum principle, even for very small time steps. A remarkable increase of accuracy with respect to several stabilised methods is achieved.

The outline of the paper is as follows. In Section \ref{se:af}, we describe the abstract spectral VMS discretisation to linear parabolic problems, which is applied to transient advection-diffusion problems in Section \ref{se:cd}. A feasible method is built in Section \ref{se:SVMS_F}, based upon an offline/online strategy. We present in Section \ref{se:nr} our numerical results, and address some conclusions in Section \ref{se:conclusions}.

%%%%%%%%%%%%%%%%%%%%%%%%%%%%%%%%%%%%%%%%%
\section{Spectral VMS method}
\label{se:af}
In this section, we build the spectral VMS discretisation to abstract linear parabolic equation.

Let $\Omega$ a bounded domain in $\R^d$ and $T>0$ a final time. Let us consider two separable Hilbert spaces on $\Omega$, $X$ and $ H$, so that $X \subset H$  with dense and continuous embedding. We denote $(\cdot,\cdot)$ the scalar product in $X$; $X'$ and
$H'$ are the dual topological spaces of $X$ and $H$, respectively, and
$\langle \cdot, \cdot \rangle$  is the duality pairing between $X'$ and $X$.
We identify $H$ with its topological dual $H'$ so that $X \subset H \equiv H' \subset X'$. Denote by ${\cal L}(X)$ the space of bilinear bounded forms on $X$ and
consider $b \in L^1(0,T; {\cal L}(X))$ uniformly bounded and $X$-elliptic with respect to $t\in(0,T)$.

Given the data $f \in L^2(0,T;X')$ and $u_0 \in H$, we consider the following variational parabolic problem:
\begin{equation}
\label{eq:af}
\left\{\begin{array}{l}
\mbox{Find } u \in L^2((0,T);X) \cap C^0([0,T];H) \mbox{ such that,}
\\[0,2cm]
\d\frac{d}{dt}(u(t),v) \,+\, b(t;u(t),v) \,=\, \langle f(t),v\rangle \quad \forall \, v \in X,
\quad \mbox{in  } {\cal D}'(0,T),
\\[0,4cm]
u(0) \,=\, u_0 \quad \mbox{in  } H.
\end{array}\right.
\end{equation}
It is well known that this problem is well posed and, in particular, admits a unique solution \cite{dautray}.
To discretize this problem, we proceed through the so-called Horizontal Method of Lines \cite{asensio,bernardi,hauke}. First, we discretise in time by the Backward Euler scheme and then we apply a steady spectral VMS method to the elliptic equations appearing at each time step.

Consider a uniform partition of the interval $[0,T]$, $\{0=t_0<t_1<...<t_N=T\}$,  with time-step size $\Delta t=T/N$. The time discretization of problem (\ref{eq:af}) by the Backward Euler scheme gives the following family of stationary problems: given the initialization  $u^0 \,=\, u_0$,
\begin{equation}
\label{eq:discreto_t}
\left\{\begin{array}{l}
\mbox{Find } \unu\in X \mbox{ such that,}
\\[0,2cm]
\left( \displaystyle \frac{u^{n+1}-u^n}{\Delta t}, v\right) + b^{n+1}(u^{n+1},v) \,=\, \dt\,  \langle f^{n+1},v \rangle   \quad \forall \, v\in X,
\\[0,4cm]
\forall \,  n=0,1, \hdots,N-1,
\end{array}\right.
\end{equation}

where $b^{n+1}$ and $f^{n+1}$ are some approximations of $b(t;\cdot,\cdot)$ and $f(t)$, respectively,  at $t=t_{n+1}$.

\vspace{4mm}

To discretise in space problem (\ref{eq:discreto_t}), we assume that $\Omega$ is polygonal (when $d=2$) or polyhedric (when $d=3$), and consider
a family of conforming and regular triangulations of $\overline{\Omega}$, $\{{\cal T}_{h}\}_{h>0}$, formed by simplycial elements, where the parameter $h$ denotes the largest diameter of the elements of the triangulation ${\cal T}_{h}$.  The VMS method is based on the decomposition,
\begin{equation*}
X=X_h\oplus \tilde{X},
\end{equation*}
where $X_h$ is a continuous finite element sub-space of $X$ constructed on the grid ${\cal T}_{h}$, and $\tilde{X}$ is a complementary, infinite-dimensional, sub-space of $X.$ Notice that this is a multi-scale decomposition of the space $X$, being $X_h$ the large or resolved scale space and
$\tilde{X}$ the small or sub-grid scale space.  This decomposition defines two projection operators $P_h: X \mapsto X_h$ and $\tilde{P}: X \mapsto \tilde{X}$, by
\begin{equation}
\label{proyectores}
P_h (v)= v_h ,\quad \tilde{P}(v)=\tilde{v}, \quad \forall \, v\in X,
\end{equation}
where $v_h$ and $\tilde{v}$ are the unique elements belonging to $X_h$ and $\tilde{X}$, respectively, such that \mbox{$v= v_h + \tilde{v}$}. Hence, one can decompose the solution of problem (\ref{eq:discreto_t}) as
\begin{equation*}
\label{ude}
u^{n+1}=\uhnu +\utnu,
\end{equation*}
where $\uhnu=P_h (u^{n+1})$ and $\utnu=\tilde{P}(u^{n+1})$ satisfy the coupled problem,
\begin{equation}
\label{VMS} \nonumber
\left\{  \begin{array}{ll}
\displaystyle  \left( \frac{u^{n+1}_h-u^n_h}{\Delta t},v_h \right)  + \left( \frac{\utnu-\tilde{u}^n}{\Delta t}, v_h \right) + b^{n+1}(u^{n+1}_h,v_h)  + b^{n+1}(\utnu ,v_h)
 = \langle f^{n+1},v_h \rangle & (\ref{VMS}.1)
 \\ [0,6cm]
\displaystyle  \left( \frac{u^{n+1}_h-u^n_h}{\Delta t},\tilde{v} \right)  + \left( \frac{\utnu-\tilde{u}^n}{\Delta t}, \tilde{v} \right) + b^{n+1}(u^{n+1}_h,\tilde{v})  + b^{n+1}(\utnu ,\tilde{v})
 = \langle f^{n+1},\tilde{v} \rangle & (\ref{VMS}.2)
\\ [0,5cm]
\forall v_h \in X_h, \, \forall \tilde{v}\in \tilde{X},
\end{array}\right.
\end{equation}
for all $ \,  n=0,1, \hdots,N-1$. The small scales component $\utnu$ thus satisfies,
\begin{equation}
\label{eq:smalls}
(\utnu, \tilde{v})   + \Delta t \  b^{n+1}(\utnu ,\tilde{v})
= \langle R^{n+1}(u_h^{n+1}),\tilde{v}\rangle
\end{equation}
where $\langle R^{n+1}(u_h^{n+1}),\tilde{v}\rangle $ is the residual of the large scales component, defined as,
\begin{equation*}
\begin{array}{r}
\langle R^{n+1}(\uhnu), \vt \rangle := (u^{n}_h+ \tilde{u}^{n},\tilde{v}) +  \Delta t \ \langle f^{n+1},\tilde{v}\rangle - (u^{n+1}_h, \tilde{v}) -  \Delta t  \ b^{n+1}(u^{n+1}_h,\tilde{v}),
\,\forall \, \vt\in \tilde{X}.
\end{array}
\end{equation*}
In condensed notation, this may be written as,
\begin{equation}
\label{eq:smallsPi}
\utnu=\Pi^{n+1}(R^{n+1}(\uhnu)),
\end{equation}
where
\begin{equation*}
\begin{array}{cccl}
 \Pi^{n+1}:  & \tilde{X} & \rightarrow & \tilde{X} \\
 			& g 			& \mapsto			& \Pi^{n+1}(g) = \tilde{G}
\end{array}
\end{equation*}
is the static condensation operator on $\tilde{X}$ defined as,
\begin{equation*}
\label{pi}
( \tilde{G} ,  \tilde{v}) +\Delta t \  b^{n+1}(\tilde{G},\tilde{v})=\langle g, \vt \rangle \quad \forall\,  \vt \in \tilde{X},
\mbox{ for any } g \in \tilde{X}'.
\end{equation*}
Inserting expression (\ref{eq:smallsPi}) in the large scales equation (\ref{VMS}.1), leads to the condensed VMS formulation of problem (\ref{eq:discreto_t}):
\begin{equation}
\label{eq:VMSd}
\left\{\begin{array}{l}
\mbox{Find } \uhnu\in X_h \mbox{ such that}
\\[0,3cm]
(u^{n+1}_h, v_h) + \Delta t \, b^{n+1}(u^{n+1}_h,v_h)  +  (\Pi^{n+1}(R^{n+1}(\uhnu)), \vh)+\Delta t \,b^{n+1}(\Pi^{n+1}(R^{n+1}(\uhnu)), \vh)
\\[0,4cm]
 \quad\quad =\dt\,  \langle f^{n+1},v_h \rangle   \,+\, (u^{n}_h+ \Pi^n ( R^n ( u^n_h) ),v_h)
\\[0,3cm]
\forall \, \vh\in X_h,\,\,\forall \,  n=0,1, \hdots,N-1,
\end{array}\right.
\end{equation}
with $u_h^0=P_h(u_0)$. This problem is an augmented Galerkin formulation, where the additional terms represents the effect of the small scales component of the solution $(\utnu)$ on the large scales component $(\uhnu)$.

To build an approximation of the sub-grid scales, we use a spectral decomposition of the ope\-rator associated to the variational formulation on each grid element, at each discrete time. To apply this approxi\-mation to problem (\ref{eq:VMSd}), the small scales space
$\tilde{X}$ is approximated by the \lq\lq bubble" sub-spaces,
\begin{equation}
\label{eq:Xth}
\tilde{X}\simeq \tilde{X}_h = \bigoplus_{K\in\mathcal{T}_h}\tilde{X}_K,
\quad\mbox{with } \tilde{X}_K=\{\tilde{v}\in\tilde{X}, \mbox{ such that } \mbox{supp}(\tilde{v})\subset K\}.
\end{equation}
Hence, we approximate
\begin{equation}
\label{eq:utK}
\utnu \simeq \utnu_h = \sum_{K\in\mathcal{T}_h}\utnu_K, \quad \mbox{with }\utnu_K \in \tilde{X}_K,
\quad \forall \,  n=0,1, \hdots,N-1.
\end{equation}
Then, problem (\ref{eq:smalls}) is approximated by the following family of decoupled problems,
\begin{equation}
\label{eq:smallsK}
\begin{array}{l}
(\tilde{u}_K^{n+1},\tilde{v}_K) + \Delta t \ b^{n+1}(\tilde{u}_K^{n+1},\tilde{v}_K)=\langle R^{n+1}(u_h^{n+1}),\tilde{v}_K\rangle,
\quad\forall\, \tilde{v}_K\in\tilde{X}_K, \quad \forall \, K\in\mathcal{T}_h.
\end{array}
\end{equation}
%and thus
%\begin{equation}
%\label{utKPi}
%\tilde{u}_K^{n+1}=\Pi_{n,K}(R_n(u_h^{n+1})),
%\end{equation}
%where $\Pi_{n,K}$ denotes the restriction of operator $\Pi_n$ defined in (\ref{pi}) to $\tilde{X}_K$.
Let $\mathcal{L}^{n+1}:  X \mapsto X'$ be the operator defined by
\begin{equation}
\label{eq:opn}
\langle \mathcal{L}^{n+1} w,v \rangle=b^{n+1}(w,v), \quad \forall \, w, v \in X,
\end{equation}
and let $\mathcal{L}^{n+1}_K$ be the restriction of this operator to $\tilde{X}_K$. Let us also consider
the weighted  $L^2$ space,
\begin{equation*}
L^2_p(K)=\{w:K\rightarrow \mathbb{R} \mbox{ measurable such that } p|w|^2\in L^1(K)\},
\end{equation*}
where $p$ is some measurable real function defined on $K,$ which is positive a.e. on $K$. This is a Hilbert space endowed with the inner product
\begin{equation*}
(w,v)_p=\int_K p(x) w(x) v(x) dx.
\end{equation*}
We denote by $\|\cdot\|_{p}$ the norm on $L^2_p(K)$ induced by this inner product.

\vspace{2mm}

Now,  we can state the following result,  which allows to compute the small scales on each grid element by means a spectral expansion.
\begin{theorem}
\label{th:main}
Let us assume that there exists a complete sub-set $\{\tilde{z}_j^{n,K}\}_{j\in\mathbb{N}}$ on
$\tilde{X}_K$ formed by eigenfunctions of the operator $\mathcal{L}^{n}_K$, which is an orthonormal system in $L^2_{p^{n,K}}(K)$ for some weight function $p^{n,K}\in C^1(\bar{K}).$ Then,
\begin{equation}
\label{eq:serie}
\begin{array}{l}
\tilde{u}_K^{n}=\d\sum_{j=1}^{\infty}\beta_j^{n,K} \, r^{n,K}_j \, \tilde{z}_j^{n,K}, \quad \forall \, n=1,\ldots,N,
\end{array}
\end{equation}
where
$\beta_j^{n,K} = (\Lambda_j^{n,K})^{-1}$,  with $\Lambda_j^{n,K}=1+\Delta t \, \lambda_j^{n,K}$ being $\lambda_j^{n,K}$ the eigenvalue of
$\mathcal{L}^{n}_K$ associated to $\tilde{z}_j^{n,K}$, and
$$
r^{n,K}_j = \langle R^{n}(u_h^{n}), p^{n,K}\,\tilde{z}_j^{n,K}\rangle.
$$
\end{theorem}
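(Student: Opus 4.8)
The plan is to start from the (unique) solution of the element-wise sub-grid problem and then merely read off its coefficients in the given eigenbasis $\{\tilde z_j^{n,K}\}_{j\in\N}$. Writing the level-$n$ version of problem (\ref{eq:smallsK}), the small scales $\tilde u_K^n$ solve
\begin{equation*}
(\tilde u_K^n,\tilde v_K)+\Delta t\, b^{n}(\tilde u_K^n,\tilde v_K)=\langle R^{n}(u_h^{n}),\tilde v_K\rangle,\qquad\forall\,\tilde v_K\in\tilde X_K .
\end{equation*}
The form $(w,v)+\Delta t\,b^{n}(w,v)$ is continuous and, thanks to the $X$-ellipticity of $b^{n}$ together with the nonnegative mass term, coercive on $\tilde X_K$; hence Lax--Milgram guarantees that $\tilde u_K^n$ exists and is unique, and by completeness of the eigensystem it admits an expansion $\tilde u_K^n=\sum_{j}c_j\,\tilde z_j^{n,K}$ converging in $\tilde X_K$. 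It therefore remains only to identify the coefficients $c_j$.

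The crucial device is to test the equation not against $\tilde z_k^{n,K}$ but against $p^{n,K}\tilde z_k^{n,K}$, since the factor $p^{n,K}$ converts the unweighted mass inner product into the weighted one in which the eigenfunctions are orthonormal. First I would check that these are admissible and sufficient test functions: as $p^{n,K}\in C^1(\bar K)$ is positive and bounded away from zero on $\bar K$, multiplication by $p^{n,K}$ maps $\tilde X_K$ into itself (the bubble spaces are stable under multiplication by $C^1(\bar K)$ functions) and is boundedly invertible, so $\{p^{n,K}\tilde z_k^{n,K}\}_k$ is again a total system in $\tilde X_K$; by continuity, verifying the equation on this family is equivalent to verifying it for every $\tilde v_K\in\tilde X_K$.

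Next I would evaluate the three terms with $\tilde v_K=p^{n,K}\tilde z_k^{n,K}$. The mass term collapses because $(\tilde u_K^n,p^{n,K}\tilde z_k^{n,K})=(\tilde u_K^n,\tilde z_k^{n,K})_{p^{n,K}}$, whence weighted orthonormality gives $\sum_j c_j(\tilde z_j^{n,K},\tilde z_k^{n,K})_{p^{n,K}}=c_k$. For the form $b^{n}$ I would read the hypothesis that $\tilde z_j^{n,K}$ is an eigenfunction of $\mathcal L^{n}_K$ through the identification $H\equiv H'$, i.e. $b^{n}(\tilde z_j^{n,K},v)=\lambda_j^{n,K}(\tilde z_j^{n,K},v)$ for every $v\in\tilde X_K$; taking $v=p^{n,K}\tilde z_k^{n,K}$ turns the plain product back into the weighted one and yields $b^{n}(\tilde z_j^{n,K},p^{n,K}\tilde z_k^{n,K})=\lambda_j^{n,K}(\tilde z_j^{n,K},\tilde z_k^{n,K})_{p^{n,K}}=\lambda_j^{n,K}\delta_{jk}$, so that the diffusion-convection contribution equals $\Delta t\,\lambda_k^{n,K}c_k$. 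The right-hand side is $\langle R^{n}(u_h^{n}),p^{n,K}\tilde z_k^{n,K}\rangle=r_k^{n,K}$ by definition. Collecting the three terms gives $(1+\Delta t\,\lambda_k^{n,K})\,c_k=r_k^{n,K}$, that is $c_k=(\Lambda_k^{n,K})^{-1}r_k^{n,K}=\beta_k^{n,K}r_k^{n,K}$, which is exactly (\ref{eq:serie}).

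I expect the genuine work to lie not in this algebraic identification but in the functional-analytic bookkeeping: justifying that $p^{n,K}\tilde z_k^{n,K}\in\tilde X_K$ and that these functions are total, and controlling the convergence of the series and the interchange of the infinite sum with $b^{n}(\cdot,p^{n,K}\tilde z_k^{n,K})$. The latter follows because $R^{n}(u_h^{n})$, represented in $L^2_{p^{n,K}}(K)$, has square-summable weighted Fourier coefficients $r_j^{n,K}$, while $0<\beta_j^{n,K}\le 1$ and $\lambda_j^{n,K}(\beta_j^{n,K})^2$ stays bounded as $\lambda_j^{n,K}\to\infty$; hence both $\sum_j|c_j|^2$ and the energy $\sum_j\lambda_j^{n,K}|c_j|^2$ are finite, which secures convergence in $\tilde X_K$ and legitimises the termwise computation. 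The single point demanding care is precisely the twofold role of the weight $p^{n,K}$, mediating between the unweighted mass product and the weighted orthonormality, which is exactly what the choice of test function $p^{n,K}\tilde z_k^{n,K}$ is designed to exploit.
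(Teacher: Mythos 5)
Your proof is correct and follows essentially the same route as the paper: the paper establishes Theorem \ref{th:main} by invoking Theorem 1 of \cite{ChaconDia}, whose argument is precisely the one you reconstruct --- test the element sub-grid problem against the weighted functions $p^{n,K}\tilde z_k^{n,K}$, use the weighted orthonormality to collapse the mass term and the eigenfunction property (eigenfunction in the \emph{first} argument of $b^n$, which is the right one for the nonsymmetric form) to collapse the $b^n$ term, with the mass contribution shifting the eigenvalue to $\Lambda_k^{n,K}=1+\Delta t\,\lambda_k^{n,K}$. The only difference is that you supply the functional-analytic bookkeeping (stability of $\tilde X_K$ under multiplication by $p^{n,K}$, square-summability giving convergence of the series in $\tilde X_K$) that the paper omits ``for brevity''.
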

This is a rather straightforward application of Theorem 1 in \cite{ChaconDia}, that we do not detail for brevity.

Once the eigenpairs $(\tilde{z}_j^{n+1,K},\lambda_j^{n+1,K})$ are known, the previous procedure allows us to directly compute $u_h^{n+1}$ from problem (\ref{eq:VMSd}), approximating the sub-grid component $\tilde{u}^{n+1}$ by expressions (\ref{eq:utK}) and (\ref{eq:serie}). This gives the spectral VMS method to fully discretize problem (\ref{eq:af}). Namely,
\begin{equation}
\label{eq:SVMS}
\left\{ \begin{array}{l}
\mbox{Find } \uhnu\in X_h \mbox{ such that}
\\[0,3cm]
(u^{n+1}_h, v_h) + \Delta t \,  b^{n+1}(u^{n+1}_h,v_h)  + (\utnu_h, v_h) + \Delta t \, b^{n+1}(\utnu_h ,v_h)
\\[0,4cm]
 \qquad = \dt\,  \langle f^{n+1},v_h \rangle  + (u^{n}_h,v_h) + (\tilde{u}_h^{n}, v_h)
\\ [0,3cm]
\forall \, v_h \in X_h, \quad \forall \,  n=0,1,\hdots,N-1,
\end{array}\right.
\end{equation}
where,
\begin{equation}
\label{eq:smalls_d}
\tilde{u}^{n+1}_h = \sum_{K\in\mathcal{T}_h} \sum_{j=1}^{\infty}
\beta_j^{n+1,K} \, \langle R^{n+1}_h(u_h^{n+1}), p^{n+1,K} \,\tilde{z}_j^{n+1,K}\rangle \, \tilde{z}_j^{n+1,K},
\quad \forall \,  n=0, \hdots,N-1,
\end{equation}
with
\begin{equation*}
%\label{eq:res_d}
\langle R^{n+1}_h(\uhn), \vt \rangle :=
(u^{n}_h+\tilde{u}_h^{n},\tilde{v}) + \Delta t \, \langle f^{n+1},\tilde{v}\rangle
- (u^{n+1}_h, \tilde{v}) -  \Delta t  \ b^{n+1}(u^{n+1}_h,\tilde{v}),
\quad\forall \, \vt\in \tilde{X},
\end{equation*}
$u_h^0 \,=\, P_h(u_0)$ and $\tilde{u}_h^0 \in\tilde{X}_h$ some approximation of $\tilde{u}^0$.

%%%%%%%%%%%%%%%%%%%%%%%%%%%%%%%%%%%%%%%%%%%%%%%%%%%%%%
\section{Application to transient advection-diffusion problems}
\label{se:cd}
In this section, we apply the abstract spectral VMS method introduced in the previous section to transient advection-diffusion equations, that we state with homogeneous boundary conditions,
\begin{equation}
\label{eq:cd}
\left\{\begin{array}{ll}
\partial_t u +\mathbf{a}\cdot\nabla u-\mu \Delta u = f &
\mbox{in }\Omega\times(0,T),
\\[0,2cm]
u=0 & \mbox{on }\partial \Omega\times(0,T),
\\[0,2cm]
u(0)=u_0 & \mbox{on } \Omega,
\end{array}\right.
\end{equation}
where
%$\mathbf{a} \in L^\infty(\Omega \times (0,T))^d$
$\mathbf{a} \in L ^\infty(0,T;W^{1,\infty}(\Omega))^d$ is the advection velocity field, $\mu>0$ is the diffusion coefficient, $f\in L^2((0,T);L^2(\Omega))$ is the source term and $u_0\in L^2(\Omega)$ is the initial data. Different boundary conditions may be treated as well, as these also fit into the general spectral VMS method introduced in the previous section.

The weak formulation of problem \eqref {eq:cd} reads,
\begin{equation}
\label{eq:weak_cd}
\left\{\begin{array}{l}
\mbox{Find }u\in L^2((0,T);H^1_0(\Omega))\cap C^0([0,T];L^2(\Omega)) \mbox{ such that,}
\\[0,2cm]
\d\frac{d}{dt}(u(t),v) + (\mathbf{a}\cdot\nabla u(t),v) + \mu (\nabla u(t),\nabla v)=
\langle f(t),v\rangle \quad \forall \, v\in H_0^1(\Omega),
\\[0,3cm]
u(0)=u_0.
\end{array}\right.
\end{equation}
Problem (\ref{eq:weak_cd})  admits the abstract formulation (\ref{eq:af}) with
$H=L^2(\Omega)$, $X=H^1_0(\Omega)$ and
\begin{equation*}
b(w,v)=(\mathbf{a}\cdot\nabla w,v)+\mu (\nabla w,\nabla v),
\quad \forall\, w,v\in H^1_0(\Omega).
\end{equation*}

In practice, we replace the velocity field $\a$ by $\a_h$, the piecewise constant function defined a. e. on $\overline{\Omega}$ such that $\a_h =\a_K$ on the interior of each element $K\in\trh$. Then, we apply the spectral VMS method to the approximated problem,
\begin{equation}
\label{vmstilde}
\left\{\begin{array}{ll}
\mbox{Find } \hu^{n+1}\in H^1_0(\Omega)\mbox{ such that}
\\[0,3cm]
\noindent \begin{array}{r}
\left( \displaystyle \frac{\hu^{n+1}-\hu^n}{\Delta t}, v\right) + (\a_h^{n+1}\cdot\nabla \hu^{n+1},v) + \mu (\nabla \hu^{n+1},\nabla v)=
\langle f^{n+1},v\rangle,  \;\forall \, v\in H^1_0(\Omega),
\end{array}
\\[0,5cm]
\forall \,  n=0,1, \hdots,N-1,
\end{array}\right.
\end{equation}
with $u^0=u_0$.

In this case, $\mathcal{L}^n w =\mathbf{a}_h^n \cdot\nabla w - \mu \Delta w$
is the advection-difusion operator. Proposition 1 in \cite{ChaconDia} proved that the eigenpairs $(\tilde{w}_j^{n,K}, \lambda_j^{n,K})$ of operator $\mathcal{L}^n_K$ can be obtained from the eigenpairs $(\tilde{W}_j^K, \sigma_j^K)$ of the Laplace operator in $H_0^1(K)$, in the following way:
\begin{equation}
\label{eq:eigen}
\barr{l}
\tilde{w}_j^{n,K} = \psi^{n,K} \, \tilde{W}_j^K, \quad \psi^{n,K}(x)= \exp\left(\frac{1}{2\mu} \,\a_K^n\cdot x \right)
\\[0,2cm]
\lambda_j^{n,K} = \mu \, \left( \sigma_j^K + \d\frac{|\a_K^n|^2}{4\mu^2} \right),
\quad \forall \, j\in\N.
\earr
\end{equation}
Moreover, for the weight function
\begin{equation}
\label{eq:pk}
\d p^{n,K}(x)= (\psi^{n,K})^{-2}= \exp\left(-\frac{1}{\mu}\,\a_K\cdot x\right)
\end{equation}
the sequence
\begin{equation}
\label{eq:zjk}
\tilde{z}_j^{n,K} = \d\frac{\tilde{w}_j^{n,K}}{\|\tilde{w}_j^{n,K}\|_{p^{n,K}}}, \quad \forall \, j\in\N,
\end{equation}
is a complete and orthonormal system in $L_{p^{n,K}}^2(K)$ (see  Theorem 2 in \cite{ChaconDia}).
Then,  Theorem \ref{th:main} holds and it is possible to apply the method (\ref{eq:SVMS}) to problem (\ref{vmstilde}).

\subsection{One dimension problems}
The eigenpairs of the Laplace operator can be exactly computed for grid elements with simple geometrical forms, as it is the case of parallelepipeds. In the 1D case, the elements $K\in \trh$ are closed intervals, $K=[a,b]$. The eigenpairs
$(\tilde{W}_j^K, \sigma_j^K)$ are solutions of the problem
$$\left\{
\barr{l}
-\partial_{xx} \tilde{W}^K = \sigma^K \, \tilde{W}^K \,\, \mbox{in } K,
\\[0,2cm]
\tilde{W}^K(a)=\tilde{W}^K(b)=0.
\earr\right.
$$
Solutions of this problem are
$$
\tilde{W}_j^K = \sin \big( \sqrt{\sigma_j^K} \, (x- a) \big), \quad
\sigma_j^K= \left(\frac{j\pi}{h_K} \right)^2, \,\, \mbox{with  } h_K=b-a,\,\, \mbox{for any } j\in\N.
$$
As the function $p^{n,K}$ defined in \eqref{eq:pk} is unique up to a constant factor, to express the eigenpairs in terms of non-dimensional parameters, we replace $p^{n,K}$  by (we still denote it in the same way),
\begin{equation}
\label{pk1d}
p^{n,K}(x)= \exp\left(-2\, P_{n,K}\, \frac{x-a}{h_K}\right ),
\end{equation}
where $P_{n,K}=\d\frac{|\a_K^n|\,h_K}{2\mu}$ is the element P\'eclet number. Then, from expressions (\ref{eq:eigen}) and (\ref{eq:zjk}),
\begin{equation}
\label{eig1d}
\tilde{z}_j^{n,K}=\sqrt{\frac{2}{h_K}} \exp\left(P_{n,K} \frac{x- a}{h_K}\right) \,
\sin \left(j\pi \frac{x- a}{h_K}\right), \quad
\lambda_j^K= \mu \, \left(\frac{j\pi}{h_K} \right)^2 + \d\frac{|\a_K^n|^2}{4\mu}.
\end{equation}
 It follows
\begin{equation}
\label{beta1d}
\d  \beta_j^{n,K} = \frac{1}{1+S_K(P_{n,K}^2+\pi^2j^2)}    \quad   \mbox{for any } j\in\N,
\end{equation}
where $S_K=\d\frac{\Delta t\, \mu}{h_K^2}$ is a non-dimensional parameter that represents the relative strength of the time derivative and diffusion terms in the discrete equations, at element $K$.

%%%%%%%%%%%%%%%%%%%%%%%%%%%%%%%%%%%%%%%%%%%%%%%%%%%%%
\subsection*{Error analysis}
We afford in this section the error analysis for the solution of the 1D evolutive convection-diffusion problem by the spectral VMS method \eqref{eq:SVMS}.

Let $\{\alpha_i\}_{i=0}^I \in \bar{\Omega}$ be the Lagrange interpolation nodes of space $X_h$. Let $\omega_i=(\alpha_{i-1},\alpha_i)$, $i=1,\ldots, I$. Setting $\tilde{X}_i=H^1_0(\omega_i)$, it holds,
$$
H^1_0(\Omega)= X_h \oplus \tilde{X},\quad \mbox{with  }\, \tilde{X}=\bigoplus_{i=1}^I \tilde{X}_i.
$$

Observe that this decomposition generalises (\ref{eq:Xth}) with $\tilde{X}_h=\tilde{X}$. Moreover, when operator
in (\ref{eq:opn}) is $\mathcal{L}^n w =\a_h^n\cdot\nabla w - \mu \Delta w$,
problem (\ref{eq:smalls}) can be exactly decoupled into the family of problems (\ref{eq:smallsK}).
In particular, if the projection operator $P_h$ in \eqref{proyectores} is the Lagrange interpolate on $X_h$, then
$\hu_h^n =P_h(\hu^n)$, $\tilde{\hu}_h^n= \hu^n- \hu_h^n \in  \tilde{X}$ and consequently, $\hu_h^n \in X_h$ satisfies method \eqref{eq:SVMS}.

Notice that thanks to the spectral expansion, the sub-grid scales contribution in method \eqref{eq:SVMS}, when the advection velocity is element-wise constant, is exactly computed, and then, the discretisation error only is due to the time discretisation and the approximation of the advection velocity $\a$, but not to the space discretisation.

Therefore, to analize the discretisation error we compare the solution of problem \eqref{vmstilde} to the solution of the implicit Euler time semi-discretisation of problem \eqref{eq:weak_cd},
\begin{equation}
\label{eq:discreto_t3}
\left \{ \begin{array}{l}
\mbox{Find } u^{n+1}\in H^1_0(\Omega)\mbox{ such that}
\\[0,3cm]
\left( \displaystyle \frac{u^{n+1}-u^n}{\Delta t}, v\right) + (\a^{n+1}\partial_x u^{n+1},v) + \mu \,(\partial_x u^{n+1},\partial_x v)=
\langle f^{n+1},v\rangle  \quad \forall \, v\in H^1_0(\Omega),
\\[0,3cm]
\forall \,  n=0,1, \hdots,N-1,
\end{array}\right.
\end{equation}
with $u^0=u_0$.

We assume that $\a_h$ restricted to each $K$ is extended by continuity to $\partial K$. Given a sequence $b=\{b^n,\,n=1,\cdots, N\} $ of elements of a normed space $Y$, let us denote,
$$
\|b\|_{l^p(Y)}=\left (\Delta t \,\sum_{n=1}^N \|b^n\|_Y^p \right )^{1/p},\quad
\|b\|_{l^\infty(Y)}=\max_{n=1,\cdots,N} \|b^n\|_Y.
$$

We shall use the following discrete Gronwall's lemma, whose proof is standard, and so we omit it.
\begin{lemma}\label{lemagrom}
Let $\alpha_n$, $\beta_n$, $\gamma_n$, $n=1,2,...$ be non-negative real numbers such that
\begin{equation*} \label{estaux1}
(1-\sigma \,\Delta t) \,\alpha_{n+1} + \beta_{n+1} \le (1+\tau \,\Delta t) \,\alpha_n + \gamma_{n+1}
\end{equation*}
for some $\sigma \ge 0$, $\tau \ge 0$. Assume that $\sigma \, \Delta t \le 1-\delta$ for some $\delta >0$. Then it holds
\begin{equation*} \label{estgr1}
\alpha_n \le e^{\rho \,t_n}\,\alpha_0 + \frac{1}{\delta} \,\sum_{l=1}^n e^{\rho\, (t_n-t_l)}\, \gamma_l,
\end{equation*}
and
\begin{equation*} \label{estgr2}
\sum_{l=1}^n\beta_l \le \left (1+\frac{\tau}{\sigma} +(\sigma+\tau)\,e^{\rho\, t_{n-1}}\,t_{n-1} \right )\,\alpha_0 +\frac{1}{\delta} \, \left ( 1+ (\sigma+\tau)\,e^{\rho\, t_{n-1}}\,t_{n-1} \right )\sum_{l=1}^n\,\gamma_l,
\end{equation*}
with $\displaystyle \rho=(\sigma+\tau)/\delta$.
\end{lemma}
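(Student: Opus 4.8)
The plan is to reduce the coupled inequality to a scalar linear recursion for the $\alpha_n$ alone, solve it by iteration to obtain the first estimate, and then recover the sum of the $\beta_n$ by summing the original inequality and feeding in the first estimate. The standing hypothesis $\sigma\,\Delta t\le 1-\delta$ is exactly what guarantees $1-\sigma\,\Delta t\ge\delta>0$, so the leading coefficient on the left is bounded away from zero and one may divide through and discard non-negative terms freely.

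First I would drop the non-negative term $\beta_{n+1}$ and divide by $1-\sigma\,\Delta t$ to obtain
$$\alpha_{n+1}\le\frac{1+\tau\,\Delta t}{1-\sigma\,\Delta t}\,\alpha_n+\frac{1}{1-\sigma\,\Delta t}\,\gamma_{n+1}.$$
The key elementary estimates are that, since $1-\sigma\,\Delta t\ge\delta$,
$$\frac{1+\tau\,\Delta t}{1-\sigma\,\Delta t}=1+\frac{(\sigma+\tau)\,\Delta t}{1-\sigma\,\Delta t}\le 1+\rho\,\Delta t\le e^{\rho\,\Delta t},\qquad \frac{1}{1-\sigma\,\Delta t}\le\frac1\delta,$$
with $\rho=(\sigma+\tau)/\delta$. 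Iterating the recursion from $\alpha_0$ and using $t_n=n\,\Delta t$ (so that the repeated factors $e^{\rho\,\Delta t}$ accumulate into $e^{\rho t_n}$ and $e^{\rho(t_n-t_l)}$) yields the first estimate directly.

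For the second estimate I would keep $\beta_{k+1}$, rewrite the inequality as $\beta_{k+1}\le(1+\tau\,\Delta t)\alpha_k-(1-\sigma\,\Delta t)\alpha_{k+1}+\gamma_{k+1}$, and sum over $k=0,\dots,n-1$. The $\alpha$ contributions almost telescope: the overlapping interior sum collapses, leaving the boundary terms $(1+\tau\,\Delta t)\alpha_0$ and $-(1-\sigma\,\Delta t)\alpha_n$ together with a residual $(\sigma+\tau)\,\Delta t\sum_{k=1}^{n-1}\alpha_k$. Discarding the non-negative term $(1-\sigma\,\Delta t)\alpha_n$, the only remaining work is to bound $\Delta t\sum_{k=1}^{n-1}\alpha_k$ by inserting the first estimate, exchanging the order of the resulting double sum, bounding each exponential by $e^{\rho t_{n-1}}$ and counting at most $n-1$ terms so that the factor $\Delta t\,(n-1)=t_{n-1}$ emerges. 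Collecting terms and using $\sigma\,\Delta t\le 1$ (hence $\tau\,\Delta t\le\tau/\sigma$) together with $\delta\le 1$ to absorb the stray constants into the stated coefficients gives the claimed bound on $\sum_{l=1}^n\beta_l$.

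The conceptual steps are routine; the main obstacle is purely the bookkeeping in the second estimate — carrying out the near-telescoping of the $\alpha$ terms correctly and then exchanging the order of summation in the double sum so that the geometric-type factor condenses exactly to $t_{n-1}e^{\rho t_{n-1}}$ with the precise leading constants $\tau/\sigma$ and $1/\delta$. The two auxiliary inequalities $\sigma\,\Delta t\le 1$ and $\delta\le 1$, both immediate consequences of $\sigma\,\Delta t\le 1-\delta$ with $\sigma\,\Delta t\ge 0$, are exactly what is needed to reconcile the constants produced by the estimate with those appearing in the statement.
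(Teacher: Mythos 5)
Your proof is correct. The paper itself omits the proof of this lemma (declaring it standard), and your argument --- dropping $\beta_{n+1}$, bounding the amplification factor $\frac{1+\tau\,\Delta t}{1-\sigma\,\Delta t}\le 1+\rho\,\Delta t\le e^{\rho\,\Delta t}$, iterating to get the first estimate, then summing the original inequality, exploiting the near-telescoping of the $\alpha$-terms, inserting the first estimate into the residual $(\sigma+\tau)\,\Delta t\sum_{k=1}^{n-1}\alpha_k$, and reconciling constants via $\tau\,\Delta t\le \tau/\sigma$ and $1\le 1/\delta$ --- is precisely that standard argument, with all constants matching the statement (the only degenerate point, $\sigma=0$, makes the term $\tau/\sigma$ infinite in the statement itself, so the bound is vacuous there and your proof loses nothing).
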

 Let $e=\{e^n,\,n=0,1,\cdots, N\} \subset H^1_0(\Omega)$ be the sequence of errors $e^n=u^n - \hu^n \in H^1_0(\Omega)$, where we recall that $\hu^n$  is the solution of the discrete problem \eqref{vmstilde}, and denote $\delta_t e^{n+1}=\disp\frac{e^{n+1}-e^n}{\Delta t}$. It holds the following result.
\begin{proposition} \label{teoerrest}
Assume that $\a\in L^\infty(\Omega \times (0,T))^d$, $f\in L^2(\Omega \times (0,T))$, $\disp \Delta t \le (1-\varepsilon)\, \frac{\mu}{\|\a\|_{L^\infty(\Omega \times (0,T))}^2}$  for some $\varepsilon \in (0,1)$ and $\|\a_h\|_{L^\infty(\Omega \times (0,T))} \le D\, \|\a\|_{L^\infty(\Omega \times (0,T))}$ for some constant $D>0$.
Then,
%\begin{equation} \label{esterr1}
%\|e\|_{l^\infty(L^2(\Omega))}+ \mu \|e\|_{l^2(H^1_0(\Omega))} \le C\, \|\a_h -\a\|_{l^2(L^\infty(\Omega))},
%\end{equation}
\begin{equation} \label{esterr2}
\|\delta_t e\|_{l^2(L^2(\Omega))}+ \mu \|e\|_{l^\infty(H^1_0(\Omega))} \le C\, \|\a_h -\a\|_{l^2(L^\infty(\Omega))},
\end{equation}
for some constant $C>0$ independent of $h$, $\Delta t$ and $\mu$.
\end{proposition}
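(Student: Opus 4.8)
My plan is to set up the error equation and then close a discrete energy estimate by means of the Gronwall Lemma \ref{lemagrom}. Subtracting \eqref{vmstilde} from \eqref{eq:discreto_t3} and writing the advective discrepancy as $\a^{n+1}\partial_x u^{n+1}-\a_h^{n+1}\partial_x\hu^{n+1}=\a_h^{n+1}\partial_x e^{n+1}+(\a^{n+1}-\a_h^{n+1})\partial_x u^{n+1}$, the error $e^{n+1}=u^{n+1}-\hu^{n+1}$ satisfies, for all $v\in H^1_0(\Omega)$,
\begin{equation*}
(\delta_t e^{n+1},v)+(\a_h^{n+1}\partial_x e^{n+1},v)+\mu(\partial_x e^{n+1},\partial_x v)=-((\a^{n+1}-\a_h^{n+1})\partial_x u^{n+1},v),
\end{equation*}
together with $e^0=0$, since both schemes are initialised with $u_0$. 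The right-hand side is a consistency term driven solely by the velocity perturbation $\a-\a_h$, which already explains the form of the bound \eqref{esterr2}.

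Because the two quantities to be bounded are exactly $\|\delta_t e\|_{l^2(L^2(\Omega))}$ and $\mu\|e\|_{l^\infty(H^1_0(\Omega))}$, the natural step is to test with $v=\delta_t e^{n+1}$ and multiply by $\Delta t$. The symmetric (diffusion) term then telescopes through the identity $(\partial_x e^{n+1},\partial_x(e^{n+1}-e^n))=\tfrac12(\|\partial_x e^{n+1}\|^2-\|\partial_x e^n\|^2+\|\partial_x(e^{n+1}-e^n)\|^2)$, producing the energy $\alpha_n=\tfrac\mu2\|\partial_x e^n\|^2$ and a nonnegative remainder, while the time-derivative term gives the dissipation $\Delta t\|\delta_t e^{n+1}\|^2$, the natural candidate for $\beta_{n+1}$. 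It then remains to control the two non-symmetric contributions, the advective term carrying $\a_h$ and the consistency term carrying $\a-\a_h$, by Cauchy--Schwarz and Young's inequality, absorbing the resulting multiples of $\Delta t\|\delta_t e^{n+1}\|^2$ into the dissipation and leaving a term proportional to $\alpha_{n+1}$ and the source $\gamma_{n+1}\sim\Delta t\,\|\a^{n+1}-\a_h^{n+1}\|^2_{L^\infty(\Omega)}\|\partial_x u^{n+1}\|^2$.

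The hypotheses $\Delta t\le(1-\varepsilon)\mu/\|\a\|^2_{L^\infty}$ and $\|\a_h\|_{L^\infty}\le D\|\a\|_{L^\infty}$ are invoked precisely here: they guarantee that the coefficient of $\alpha_{n+1}$ produced by the advective term is of the form $\sigma\Delta t$ with $\sigma\Delta t\le1-\delta$, so that the recursion takes the shape $(1-\sigma\Delta t)\alpha_{n+1}+\beta_{n+1}\le(1+\tau\Delta t)\alpha_n+\gamma_{n+1}$ required by Lemma \ref{lemagrom}. Since $e^0=0$ gives $\alpha_0=0$, the two conclusions of that lemma yield simultaneously the $l^\infty$ bound on $\alpha_n$, hence on $\mu\|e\|_{l^\infty(H^1_0)}$, and the summed bound on $\beta_n$, hence on $\|\delta_t e\|_{l^2(L^2)}$, both in terms of $\sum_l\gamma_l$. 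To convert $\sum_l\gamma_l$ into $\|\a_h-\a\|^2_{l^2(L^\infty(\Omega))}$ I would factor out $\max_l\|\partial_x u^l\|^2$ and invoke an a priori stability estimate for the implicit-Euler solution $u^l$ of \eqref{eq:discreto_t3}, obtained by testing that scheme with $\delta_t u^{l+1}$ and exploiting the skew-symmetry of the advective form (available since $\a\in W^{1,\infty}$) to control $\mu\|u\|^2_{l^\infty(H^1_0)}$ by the data.

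The main obstacle is to carry all of this out with a constant genuinely \emph{independent of $\mu$}. The coefficient $\sigma$ generated by the advective term is naturally of size $\|\a_h\|^2_{L^\infty}/\mu$, so a careless application of Lemma \ref{lemagrom} would produce a growth factor $e^{\rho t_n}$ with $\rho\sim\mu^{-1}$, i.e. a constant degenerating as $\mu\to0$. Avoiding this degeneration is the delicate heart of the argument: the part of the advective term proportional to $\|\partial_x e^{n+1}\|^2$ must be dominated by the diffusion energy through the CFL margin $\varepsilon$ (rather than be fed into the exponential), which is where the skew-symmetric structure of the convection and the precise balance $\Delta t\|\a\|^2\le(1-\varepsilon)\mu$ must be used together. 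The second delicate point is the bookkeeping of the two different weights in \eqref{esterr2}: the factor $\mu$ multiplying $\|e\|_{l^\infty(H^1_0)}$ is exactly what compensates the $\mu^{-1}$ appearing in the stability bound for $\|\partial_x u\|$, and one must verify that the unweighted norm $\|\delta_t e\|_{l^2(L^2)}$ closes within the same accounting. Once these two points are settled, collecting the two Gronwall conclusions gives \eqref{esterr2}.
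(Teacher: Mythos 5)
Your proposal follows essentially the same route as the paper's proof: subtracting \eqref{vmstilde} from \eqref{eq:discreto_t3}, testing with $v=\delta_t e^{n+1}$, telescoping the diffusion term, absorbing the advective and consistency terms by Young's inequality into the dissipation $\Delta t\,\|\delta_t e^{n+1}\|_{L^2(\Omega)}^2$, invoking the uniform $L^\infty(0,T;H^1_0(\Omega))$ bound on the implicit Euler solution, and closing the recursion (with $\sigma=\|\a\|_{L^\infty}^2/\mu$ and $\delta=\varepsilon$ supplied by the CFL hypothesis) via the discrete Gronwall Lemma \ref{lemagrom}. The $\mu$-uniformity subtlety you flag is genuine, but the paper treats it no more carefully than you do --- it simply applies the Gronwall lemma after reaching the inequality with coefficient $(1-\sigma\,\Delta t)$ --- so your plan coincides with the published argument.
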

\begin{proof} %Observe that $\hu^n$ satisfies
%\begin{equation}
%\label{eq:discreto_t3h}
%\begin{array}{l}
%\left( \displaystyle \frac{\hu^{n+1}-\hu^n}{\Delta t}, v_h\right) + (\a^{n+1}\partial_x \hu^{n+1},v_h) + \mu \,(\partial_x \hu^{n+1},\partial_x v_h)=
%\langle f^{n+1},v_h\rangle  \quad \forall \, v\in X_h.
%\end{array}
%\end{equation}
Let us substract \eqref{vmstilde}  from \eqref{eq:discreto_t3} with $v=v_h\in X_h$. This yields
\begin{equation*}
\label{eq:ecuerror}
\begin{array}{l}
\left( \displaystyle \frac{e^{n+1}-e^n}{\Delta t}, v_h\right) + (\a_h^{n+1}\partial_x e^{n+1},v_h) + \mu \,(\partial_x e^{n+1},\partial_x v_h)=
((\a_h^{n+1}-\a^{n+1})\partial_x u^{n+1},v_h) .
\end{array}
\end{equation*}
Setting $v_h = \delta_t e^{n+1}$, and using the identity $2(b,b-a)=\|b\|_{L^2(\Omega)}^2-\|a\|_{L^2(\Omega)}^2+\|b-a\|_{L^2(\Omega)}^2$ for any $a,\, b \in {L^2(\Omega)}^d$ yields
\begin{eqnarray}
\Delta t \,\|\delta_t e^{n+1}\|^2_{L^2(\Omega)} +\Delta t \, (\a_h^{n+1}\partial_x e^{n+1},\delta_t e^{n+1})&+&\frac{\mu}{2}\, \left (\|\partial_x  e^{n+1}\|^2_{L^2(\Omega)}- \|\partial_x  e^n\|^2_{L^2(\Omega)} \right )\nonumber \\\label{estuno}
&\le&\Delta t \,((\a_h^{n+1}-\a^{n+1})\partial_x u^{n+1},\delta_t e^{n+1}).
\end{eqnarray}
It holds
\begin{eqnarray}
|(\a_h^{n+1}\,\partial_x e^{n+1},\delta_t e^{n+1})| &\le&   \|\a_h^{n+1}\|_{L^\infty(\Omega)}\,\|\partial_x e^{n+1}\|_{L^2(\Omega)}\, \|\delta_t e^{n+1}\|_{L^2(\Omega)}\nonumber \\
&\le&\frac{1}{2} \|\delta_t e^{n+1}\|^2_{L^2(\Omega)}+\frac{ \|\a\|_{L^\infty(\Omega \times (0,T))}^2}{2}\,\|\partial_x e^{n+1}\|^2_{L^2(\Omega)}.\label{estdos}
\end{eqnarray}
As $\a\in L^\infty(\Omega \times (0,T))^d$, $f\in L^2(\Omega \times (0,T))$, then the $u^n$ are uniformly bounded in $L^\infty(0,T;H^1_0(\Omega))$, due to the standard estimates for the implicit Euler method in strong norms. Then, for some constant $C>0$,
\begin{eqnarray}
((\a_h^{n+1}-\a^{n+1})\partial_x u^{n+1},\delta_t e^{n+1})&\le&   \|\a_h^{n+1}-\a^{n+1}\|_{L^\infty(\Omega)}\,\|\partial_x u^{n+1}\|_{L^2(\Omega)}\, \|\delta_t e^{n+1}\|_{L^2(\Omega)}\nonumber \\
&\le&C \, \|\a_h^{n+1}-\a^{n+1}\|_{L^\infty(\Omega)}^2 + \frac{1}{4}\, \|\delta_t e^{n+1}\|^2_{L^2(\Omega)}.\label{estres}
\end{eqnarray}
Hence, combining \eqref{estdos} and \eqref{estres} with \eqref{estuno},
$$%\begin{eqnarray}
\frac{\Delta t}{4} \,\|\delta_t e^{n+1}\|^2_{L^2(\Omega)} +\frac{\mu}{2}\, ( 1- \sigma \,\Delta t )\, \|\partial_x  e^{n+1}\|^2_{L^2(\Omega)}\le \frac{\mu}{2}\, \|\partial_x  e^n\|^2_{L^2(\Omega)} + C \, \Delta t\,\|\a_h^{n+1}-\a^{n+1}\|_{L^\infty(\Omega)}^2,
$$%\end{eqnarray}
with $\sigma =\disp\frac{\|\a\|_{l^\infty(L^\infty(\Omega))}^2}{\mu}$. Applying the discrete Gronwall's lemma \ref{lemagrom},  estimate \eqref{esterr2} follows.
\end{proof}
\begin{corollary} \label{corol}Under the hypotheses of Proposition \ref{teoerrest}, it holds
\begin{equation} \label{estinfinf}
\mu\,\|  e^n\|_{l^\infty(L^\infty(\Omega))}\le C\, \|\a_h -\a\|_{l^2(L^\infty(\Omega))}
\end{equation}
for some constant $C>0$. Moreofer, if $\a$ is constant, then the solution $\hu_h^n$  of the spectral VMS method \eqref{eq:SVMS} coincides with the solution $u^n$ of the implicit Euler time semi-discretisation  \eqref{eq:discreto_t3} at the Lagrange interpolation nodes of space $X_h$.
\end{corollary}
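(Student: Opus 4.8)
The plan is to derive both claims as direct consequences of Proposition \ref{teoerrest} together with the exact-sub-grid-scale property recorded just before it. For the inequality \eqref{estinfinf}, I would start from the piece $\mu \|e\|_{l^\infty(H^1_0(\Omega))} \le C \|\a_h - \a\|_{l^2(L^\infty(\Omega))}$ already contained in \eqref{esterr2}, and upgrade the spatial norm from $H^1_0$ to $L^\infty$ via the one-dimensional Sobolev embedding. Concretely, since $\Omega$ is a bounded interval and each $e^n \in H^1_0(\Omega)$, expressing $e^n(x)$ as the integral of $\partial_x e^n$ from the left endpoint and applying Cauchy--Schwarz yields $\|e^n\|_{L^\infty(\Omega)} \le |\Omega|^{1/2}\,\|\partial_x e^n\|_{L^2(\Omega)}$. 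Taking the maximum over $n$ turns this into $\|e\|_{l^\infty(L^\infty(\Omega))} \le C\,\|e\|_{l^\infty(H^1_0(\Omega))}$; multiplying by $\mu$ and chaining with \eqref{esterr2} gives \eqref{estinfinf}.

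For the second assertion I would argue as follows. If $\a$ is spatially constant, then its element-wise constant interpolate coincides with it, $\a_h = \a$, so $\|\a_h - \a\|_{l^2(L^\infty(\Omega))}=0$. Feeding this into \eqref{estinfinf} forces $\mu\,\|e^n\|_{l^\infty(L^\infty(\Omega))}=0$, whence $e^n = u^n - \hu^n = 0$ for every $n$ (equivalently, problems \eqref{vmstilde} and \eqref{eq:discreto_t3} literally coincide once $\a_h = \a$, so $\hu^n = u^n$). It then remains to connect $\hu^n$ with the spectral VMS solution: by the remark preceding Proposition \ref{teoerrest}, when $P_h$ is taken to be the Lagrange interpolate the resolved component $\hu_h^n = P_h(\hu^n)$ is exactly the solution of method \eqref{eq:SVMS}. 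Hence $\hu_h^n = P_h(\hu^n) = P_h(u^n)$, and evaluating the Lagrange interpolate at the interpolation nodes $\alpha_i$ gives $\hu_h^n(\alpha_i) = P_h(u^n)(\alpha_i) = u^n(\alpha_i)$, which is the claimed nodal coincidence.

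I do not expect a serious obstacle here, since the analytic content is already packaged into Proposition \ref{teoerrest} and into the exactness of the spectral sub-grid computation. The only point needing a little care is the Sobolev step in the first part: it is the boundedness of the embedding $H^1_0(\Omega)\hookrightarrow L^\infty(\Omega)$ on an interval that makes the $L^\infty$-in-space bound \eqref{estinfinf} available, and this is precisely where the restriction to one dimension is used. The second part is then essentially bookkeeping --- identifying the vanishing of the velocity-approximation error $\|\a_h-\a\|_{l^2(L^\infty(\Omega))}$ with the exactness of the scheme, and invoking the nodal interpolation property of $P_h$.
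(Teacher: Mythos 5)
Your proposal is correct and follows essentially the same route as the paper: the bound \eqref{estinfinf} is obtained from \eqref{esterr2} via the one-dimensional continuous embedding $H^1(\Omega)\hookrightarrow L^\infty(\Omega)$, and the nodal coincidence follows from $\a_h=\a$ (so $\hu^n=u^n$) combined with the Lagrange-interpolation identity $\hu_h^n(\alpha_i)=\hu^n(\alpha_i)$ noted before Proposition \ref{teoerrest}. The only difference is that you spell out details the paper leaves implicit (the explicit Cauchy--Schwarz proof of the embedding and the chain through the vanishing right-hand side), which does not change the argument.
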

\begin{proof}
In one space dimension $H^1(\Omega) $ is continuously injected in $L^\infty(\Omega)$. Then estimate \eqref{estinfinf} follows from estimate \eqref{esterr2}.

If $\a$ is constant obviously $\hu^n = u^n$ for all $n=0,1,\cdots, N$. As $\hu^n_h(\alpha_i) = \hu^n(\alpha_i)$ at the Lagrange interpolation nodes $\alpha_i$, $i=1,\ldots,I$, then $\hu_h^n$ coincides with $u^n$ at these nodes.
\end{proof}

%%%%%%%%%%%%%%%%%%%%%%%%%%%%%%%%%%%%%%%%%%%%%%%%%%%%%%%%%%%%%
\section{Feasible method: offline/online strategy}\label{se:SVMS_F}
Building the spectral VMS method using the formulation \eqref{eq:SVMS}  requires quite large computing times, due to the summation of the spectral expansions that yield the coefficients of the matrices that appear in the algebraic expression of the method.

In order to reduce this time, we shall neglect the dependency of method \eqref{eq:SVMS} w.r.t.  $\tilde{u}^{n-1}$.

Then, our current discretization of problem \eqref{eq:af} is the following,
\begin{equation}
\label{eq:SVMS_F}
\left\{ \begin{array}{l}
\mbox{Find } \uhnu\in X_h \mbox{ such that}
\\[0,3cm]
(u^{n+1}_h, v_h) + \Delta t \,  b^{n+1}(u^{n+1}_h,v_h)  + (\utnu_h, v_h) + \Delta t \, b^{n+1}(\utnu_h ,v_h)
\\[0,4cm]
 \qquad = \dt\,  \langle f^{n+1},v_h \rangle  + (u^{n}_h,v_h) + (\tilde{u}_h^{n}, v_h)
\\ [0,3cm]
\forall \, v_h \in X_h, \quad \forall \,  n=0,1,\hdots,N-1,
\end{array}\right.
\end{equation}
where $\tilde{u}^{n+1}_h$ is given by \eqref{eq:smalls_d}, but $\tilde{u}^{n}_h$ is defined from an approximated residual:
\begin{equation}
\label{eq:uhtn_F}
\tilde{u}^{n}_h = \sum_{K\in\mathcal{T}_h} \sum_{j=1}^{\infty}
\beta_j^{n,K} \, \langle \hat{R}^n_h(u_h^{n}), p^{n,K} \,\tilde{z}_j^{n,K}\rangle \, \tilde{z}_j^{n,K}
%\quad \forall \,  n=1, \hdots,N,
\end{equation}
with
\begin{equation*}
\label{eq:res_g}
\langle \hat{R}^n_h({u}^n_h), \tilde{v} \rangle = ({u}^{n-1}_h,\tilde{v})+ \Delta t \, \langle f^n,\tilde{v}\rangle - ({u}^n_h,\tilde{v})-\Delta t\, b^n({u}^n_h,\tilde{v}),\quad \forall \tilde{v} \in \tilde{X}.
\end{equation*}

Neglecting the dependency of method \eqref{eq:SVMS} w.r.t.  $\tilde{u}^{n-1}$ allows to eliminate the recurrence in time of the sub-grid scales. Thanks to this fact, problem \eqref{eq:SVMS_F} is equivalent to a linear system (that we  describe in detail in Appendix), whose coefficients only depend on non-dimensional parameters.

\subsection{Application to 1D transient advection-diffusion problems}
In this case the coefficients of the linear system equivalent to problem \eqref{eq:SVMS_F} only depend on two non-dimensional parameters, as we confirm below.

As we can see in Appendix, if $\left\{ \varphi_m\right\}_{m=1}^{L+1}$ is a basis of the space $X_h$ associated to a partition  $\{ x_1 < x_2 < \ldots < x_{L+1}\}$ of $\Omega$, the solution $u_{h}^{n+1}$ of \eqref{eq:SVMS_F} can be written as
\begin{equation*}
u_h^{n+1} = \d\sum_{m=1}^{L+1} u_m^{n+1}\varphi_m.
\end{equation*}
Then, the unknown vector $\mathbf{u}^{n+1}=(u_1^{n+1},u_2^{n+1},\hdots,u_L^{n+1},u_{L+1}^{n+1})^t\in\R^{L+1} $ is the solution of the linear system
\beq
\label{eq:sl}
\mathbf{A}^{n+1} \, \u^{n+1} = \b^{n+1},
\eeq
where the matrix and second term are defined in \eqref{eq:csl} from matrices $A_i^{n+1}$ and $B_i^{n+1}$ given by 
\eqref{a1}-\eqref{a4} and \eqref{b1}-\eqref{b4}.

We focus, for instance, on the coefficients of matrix $A^n_1$:
$$
(A^n_1)_{lm}= \d\sum_{K\in\mathcal{T}_h} \sum_{j=1}^{\infty} \beta_j^{n,K}
(\varphi_m, p^{n,K} \tilde{z}_j^{n,K})(\tilde{z}_j^{n,K}, \varphi_l ).
$$
Let $K=[x_{l-1},x_l]\in \trh$. From expressions \eqref{pk1d} and \eqref{eig1d}, $p^{n,K}$ and $\tilde{z}_j^{n,K}$ depend on the element non-dimensional parameters $P_{n,K}$ and $S_K$ and the non-dimensional variable $\disp \hat{x}=\frac{x-x_{l-1}}{h_K}$. The change of variable $\hat{x} \in [0,1] \mapsto x\in K$ from the reference element $[0,1]$ to element $K$ in the integral expressions
$$(\varphi_m, p^{n,K} \tilde{z}_j^{n,K})=\int_K \varphi_m\, p^{n,K}(x) \tilde{z}_j^{n,K}(x) \, dx,\quad (\tilde{z}_j^{n,K}, \varphi_l ) = \int_K \tilde{z}_j^{n,K}(x)\, \varphi_l(x)\, dx
$$
readily proves that these expressions (up to a factor depending on $h$) can be written as functions of $S_K$ and $P_K$.
Further, by \eqref{beta1d} the coefficients $\beta_j^{n,K}$ also depend on $P_{n,K}$ and $S_K$. Then, for each $K\in \trh$ the spectral expansion that determines the element contribution to coefficient $(A^n_1)_{lm}$, that is,
 $$
  \sum_{j=1}^{\infty} \beta_j^{n,K}
(\varphi_m, p^{n,K} \tilde{z}_j^{n,K})(\tilde{z}_j^{n,K}, \varphi_l ),
$$
is a function of $P_{n,K}$ and $S_K$, up to a factor depending on $h$. This also holds for the coefficients of all other matrices that defines the linear system \eqref{eq:sl}, $A^n_i$ and $B^n_i$, as these are built from the basic values
$(\varphi_m, p^{n,K} \tilde{z}_j^{n,K})$, $(\tilde{z}_j^{n,K}, \varphi_l )$,
$b^n(\varphi_m, p^{n,K} \tilde{z}_j^{n,K})$ and
$b^n(\tilde{z}_j^{n,K}, \varphi_l )$.  We take advantage of this fact to compute these matrices in a fast way, by means of an offline/online computation strategy.

\vspace{-0.2cm}
\subsection*{Offline stage}
In the offline stage we compute the element contribution to the coefficients of all matrices appearing in system \eqref{eq:sl} as a function of the two parameters $P$ and $S$, that take values at the nodes of a uniform grid, between minimum and maximum feasible values of these parameters.  That is,

\begin{equation}
\label{malla_ps}
\left\lbrace (P_i,S_j)=( \Delta\, i ,  \Delta\,j ),  \quad \forall\, i,j = 1,2,\ldots M \right\rbrace, \quad \mbox{with } \Delta>0.
\end{equation}

In order to set these values, we consider the piecewise affine finite element functions associated to a uniform partition of $\Omega$ with step $h$.  In practical applications the advection dominates and $P$ takes values larger than 1. Also, taking usual values of diffusion coefficient and $h \simeq \Delta t$, $S$ takes low positive values. Moreover, when we compute the spectral series that determines the coefficients of the system matrices as functions of $P$ and $S$, we observe that these values are nearly constant as $P$ and $S$ approaches $20$. For instance, we can see in figures \ref{matcoef1foto}, \ref{matcoef2} and \ref{matcoef1} how the spectral series for the diagonal coefficient of $A_3$ matrix tend to a constant value as $P$ or $S$ increase to 20. Therefore, in numerical tests, we will consider a step $\Delta=0.02$ and $M=1000$ in \eqref{malla_ps}.

\begin{figure}[h!]
\centering
\includegraphics[scale=0.5]{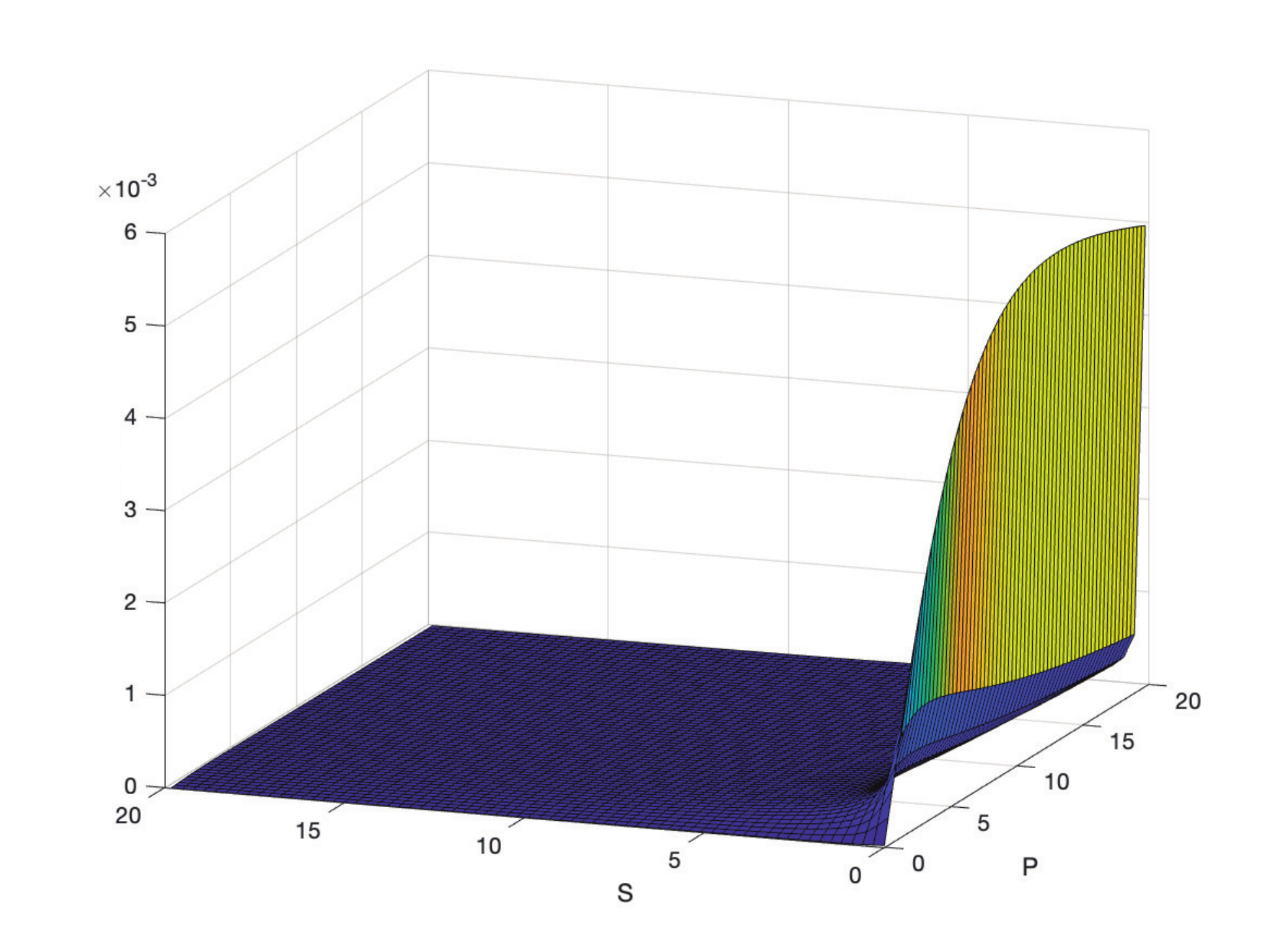}
\caption{Values of the spectral series to compute the diagonal coefficient of matrix $A_3$ for each pair $(P,S)$.}
\label{matcoef1foto}
\end{figure}

\begin{figure}[h]
\begin{minipage}[b]{0.5\linewidth}
\centering
\includegraphics[width=9cm]{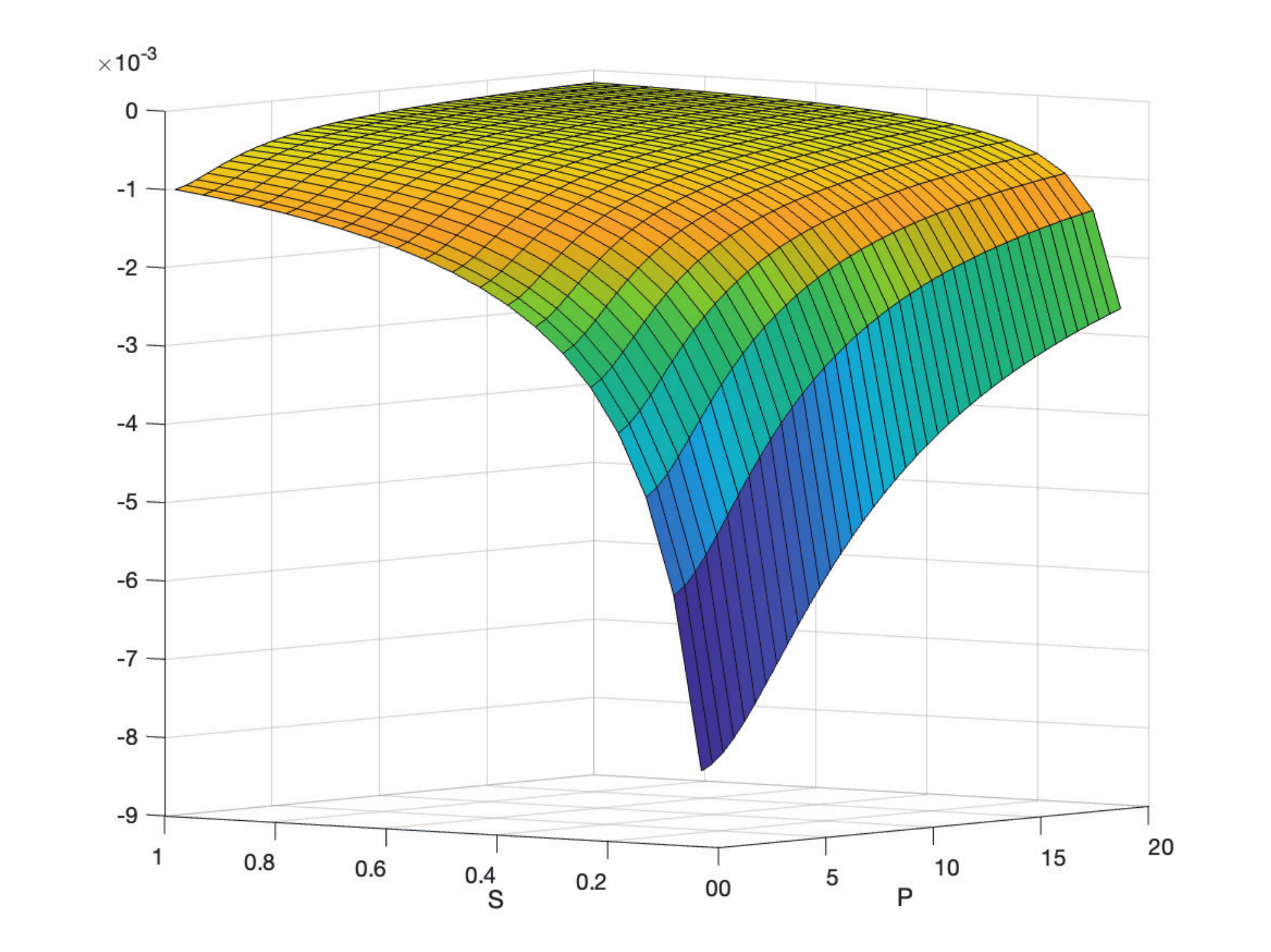}
\caption{Values of the spectral series to compute the diagonal coefficient of matrix $A_3$ for
$(P,S) \in (0,20)\times (0,1)$. }
\label{matcoef2}
\end{minipage}
\hspace{0.5cm}
\begin{minipage}[b]{0.5\linewidth}
\centering
\includegraphics[width=9cm]{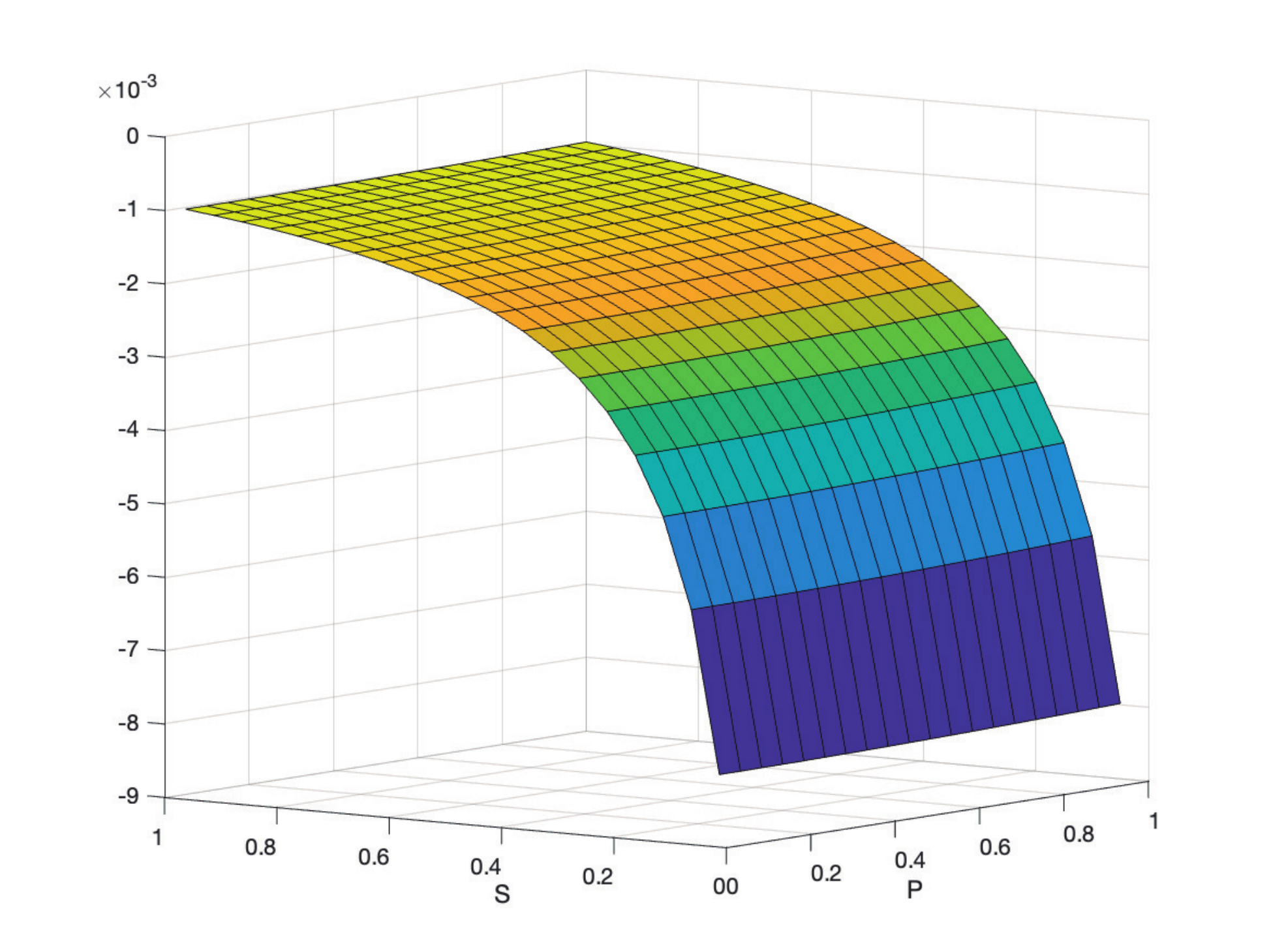}
\caption{Values of the spectral series to compute the diagonal coefficient of matrix $A_3$ for
$(P,S) \in (0,1)\times (0,1)$.}
\label{matcoef1}
\end{minipage}
\end{figure}

To do the computations in this stage, in order to avoid computational roundoff problems due to large velocities,
we express the eigenfunctions of the advection-diffusion operator given in \eqref{eq:zjk} in terms of the midpoint of the grid elements $\d x_{\frac{l,l+1}{2}} = \d \frac{x_l+x_{l+1}}{2}$. That is, we consider
\begin{equation*}
\label{eq:eigen2}
\tilde{z}_j^K=\sqrt{\frac{2}{h_K}} \exp\left(\frac{|a_K|}{2\mu}(x- x_{\frac{l,l+1}{2}})\right)
\sin \left(j\pi \frac{x- x_{\frac{l,l+1}{2}}}{h_K}\right), \quad \mbox{for any } j\in\N.
\end{equation*}
We further truncate the spectral series neglecting all the terms following to the first term that reaches an absolute value less than a prescribed threshold $\varepsilon$. Actually, we have taken $\varepsilon=10^{-10}$.  In Figure \ref{sumautofunM1} we represent the number of these summands needed to reach a first term with absolute value smaller than this $\varepsilon$ for the series defining the diagonal coefficient of $A_3$ matrix. As we can see,  more terms are needed as $P$ increases and as $S$ decreases to $0$.
\begin{figure}[h!]
\centering
\includegraphics[width=11cm]{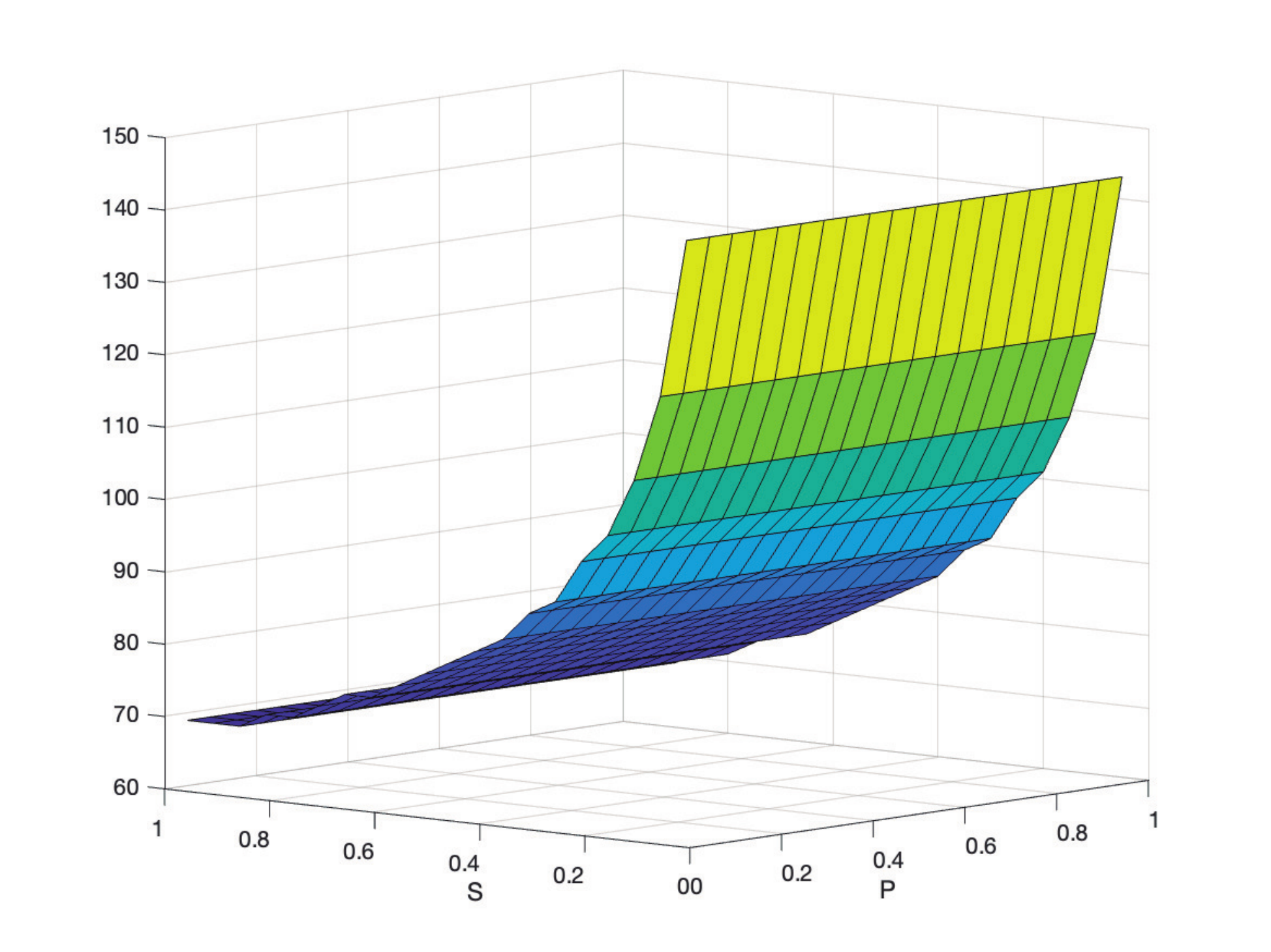}
\caption{Number of summands needed to reach a first term with absolute value lower than $\varepsilon=10^{-10}$
for the series defining the diagonal coefficient of matrix $A_3$, in terms of $(P,S)$.}
\label{sumautofunM1}
\end{figure}

\vspace{-0.2cm}
\subsection*{Online stage}
In the online stage, for each grid element $K$ we compute the contribution of this element to the
coefficients of all matrices appearing in system \eqref{eq:sl}. Then, we sum up over grid elements, to calculate these coefficients.

For that, we determine $P_K$ and $S_K$ and find the indices
$i,\, j \in {1,\ldots, M}$ such that $(P_K,\, S_K)$ belongs to $[P_i , P_{i+1}] \times [S_j , S_{j+1}]$. In other case,
if $P_k<\Delta$ we set $i=1$ and if $P_K >\Delta M$ we set $i=M-1$, and similarly for $j$ in terms of $S_K$.

As we see above, each element contribution is a function of $P_K$ and $S_K$ that we denote $C(P_K,S_K)$ in a generic way. For instance, for matrix $A^n_1$,
$$
 C(P_K,S_K)= \sum_{j=1}^{\infty} \beta_j^{n,K}
(\varphi_m, p^{n,K} \tilde{z}_j^{n,K})(\tilde{z}_j^{n,K}, \varphi_l ).
$$

Then, we compute $C(P_k,S_K)$ by the following second-order interpolation formula:
$$
C(P_K,S_K) \simeq \sum_{k=1}^4 \frac{Q_k}{Q} \, C(\alpha_k),
$$
where the $\alpha_k$ are the four corners of the cell $[P_i , P_{i+1}] \times [S_j, S_{j+1}]$, \break $Q=\Delta^2$ is its area and the $Q_k$ are the areas of the four rectangles in which the cell is split by $(P_k,S_K)$ (see Figure \ref{cuadrado2t}).

\begin{figure}[h]
\centering
\includegraphics[width=6cm]{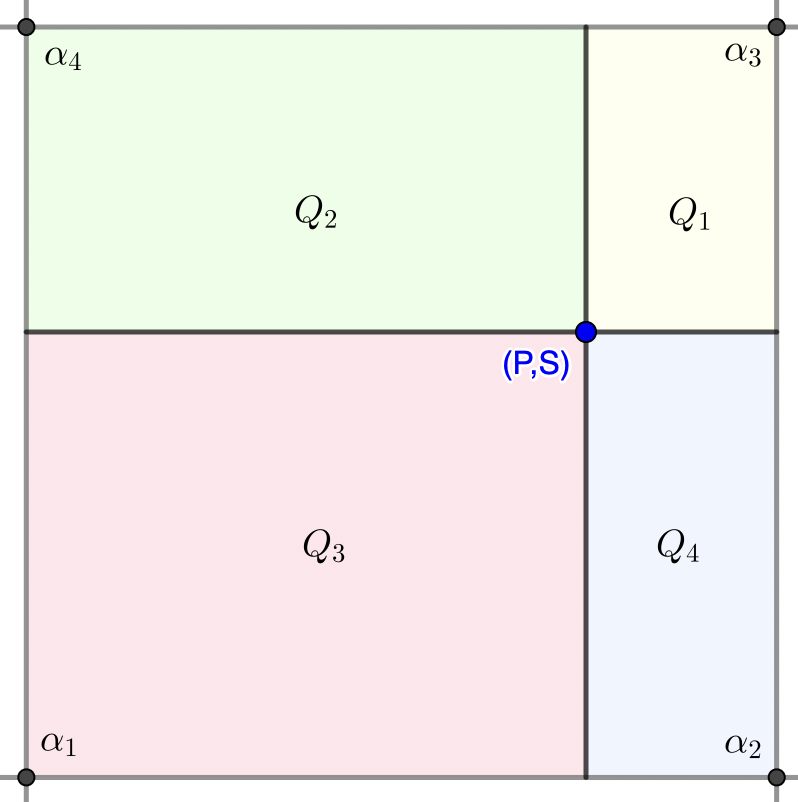}
\caption{Splitting of interpolation cell for online computation of matrices coefficients.}
\label{cuadrado2t}
\end{figure}

%%%%%%%%%%%%%%%%%%%%%%%%%%%%%%%%%%%%%%%%%%%%%%
\section{Numerical Tests}
\label{se:nr}
In this section, we present the numerical results obtained with the spectral method to solve 1D advection-diffusion problems. Our purpose, on the one hand, is to confirm the theoretical results stated in Corollary \ref{corol} for the spectral VMS method and, on the other hand, test the accuracy of the spectral VMS and feasible spectral VMS methods for problems with strong advection-dominance, in particular by comparison with several stabilised methods.

\subsection{Test 1: Accuracy of spectral VMS method   for constant advection velocity}
To test the property stated in Corollary \ref{corol}, we consider the following advection-diffusion problem:\begin{equation}
\label{eq:cd1}
\left\{\begin{array}{ll}
\partial_t u +a \, \partial_x u-\mu \, \partial^2_{xx} u = 0 &
\mbox{in }(0,1)\times(0,T),
\\[0,2cm]
u(0,t)=\exp((\mu-a)t),\quad u(1,t)=\exp(1+(\mu-a)t) & \mbox{on }(0,T), \\[0,2cm]
u(x,0)=\exp(x) & \mbox{on } (0,1),
\end{array}\right.
\end{equation}
whose exact solution is given by 
$
\exp(x+(\mu-a)t).
$

We set $T=0.1$, $a=1$ and $\mu=20$.  We apply the spectral VMS method \eqref{eq:SVMS} to solve this problem with time step $\Delta t=0.01$ and piecewise affine finite element space on a uniform partition of interval $(0,1)$ with steps $h=0.05/(2^i)$ for $i=2,3,...,7$. We have truncated the spectral expansions that yield the small scales $\tilde{u}_h^n$ to $10$ eigenfunctions.  The errors in $l^\infty(L^2)$ and $l^2(H^1)$ norms computed at grid nodes are represented in Figure \ref{fig_test0}. We observe that, indeed, the errors quite closely do not depend on the space step $h$.

Moreover, we have computed the convergence orders in time, obtaining very closely order 1 in  $l^2(H^1)$ norm and order 2 in $l^\infty(L^2)$ norm, as could be expected.

\begin{figure}[h!]
\centering
\includegraphics[width=10cm]{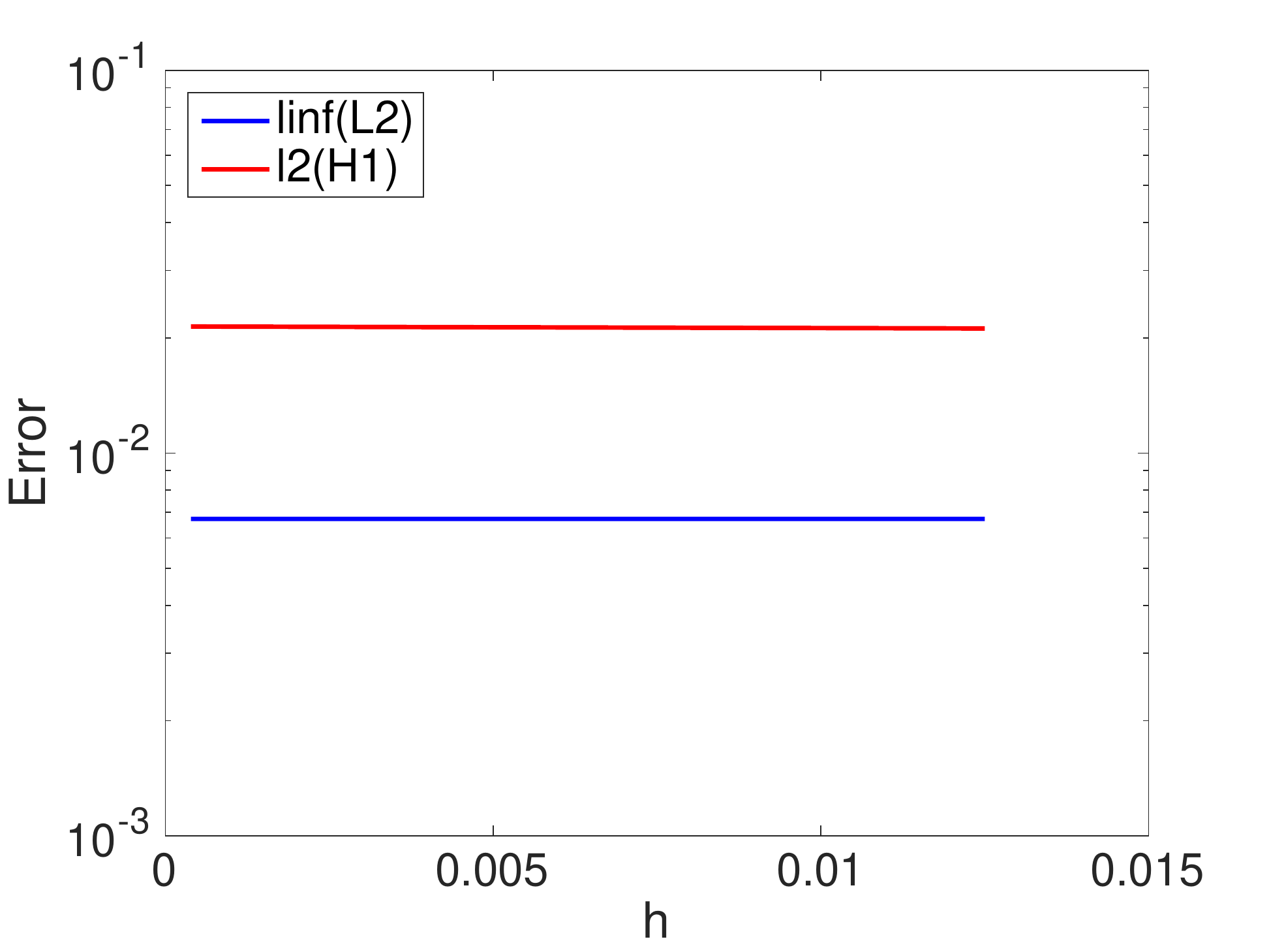}
\caption{Test 1. $l^\infty(L^2)$ and $l^2(H^1)$ errors for the spectral VMS solution of problem \eqref{eq:cd1}.}
\label{fig_test0}
\end{figure}

\vspace{1cm}
In the following numerical experiments we consider the 1D problem (\ref{eq:cd}) setting $\Omega=(0,1)$, with constant velocity field $a$, source term $f=0$ and the hat-shaped initial condition
\begin{equation}
\label{solexacta}
u_0 = \left\{ \begin{array}{ll} 1 & \text{if } |x-0.45|\leq 0.25 ,  \\ 0  & \text{otherwise.} \end{array} \right.
\end{equation}
We also set $X_h$ to be the piecewise affine finite element space constructed on a uniform partition of interval $(0,1)$ with step size $h$.

\subsection{Test 2: Accuracy of spectral VMS method}
\label{nse:SVMS}

\vspace{0.2cm}
{\bf Very large P\'eclet numbers}

\noindent In this test we examine the accuracy of spectral VMS method for very high P\'eclet numbers.
To do that, we set $a=1000$ and $\mu = 1$,  and solve this problem by the spectral VMS method \eqref{eq:SVMS}, truncating to 150 spectral basis functions the series \eqref{eq:smalls_d} that yield the sub-grid components.
The solution interacts with the boundary condition at $x=1$ in times of order $1/a$, that is, $10^{-3}$. We then set a time-step $\Delta t = 10^{-3}$.  Moreover, we set $h=0.02$ that corresponds to $P=10$ and $S=2.5$. We present the results obtained in Figure \ref{fig_test1}, where we represent the Galerkin solution (in red) on the left panels  and the spectral solution (in cyan) on the  right panels, both with the exact solution (in blue): in $(a)$ the first 4 time-steps,  in $(b)$ time-steps from 5 to 7 and in $c$ times-steps 8 and 9.   By Corollary \ref{corol} the discrete solution coincides at the grid nodes with the exact solution of the implicit Euler semi-discretisation, the expected errors at grid nodes are of order $\Delta t=10^{-3}$. We can see that the spectral solution indeed is very close to the exact solution at grid nodes.
\begin{figure}[h]
$(a)$ \hspace{8cm} $(b)$
\centering
\includegraphics[width=8.42cm]{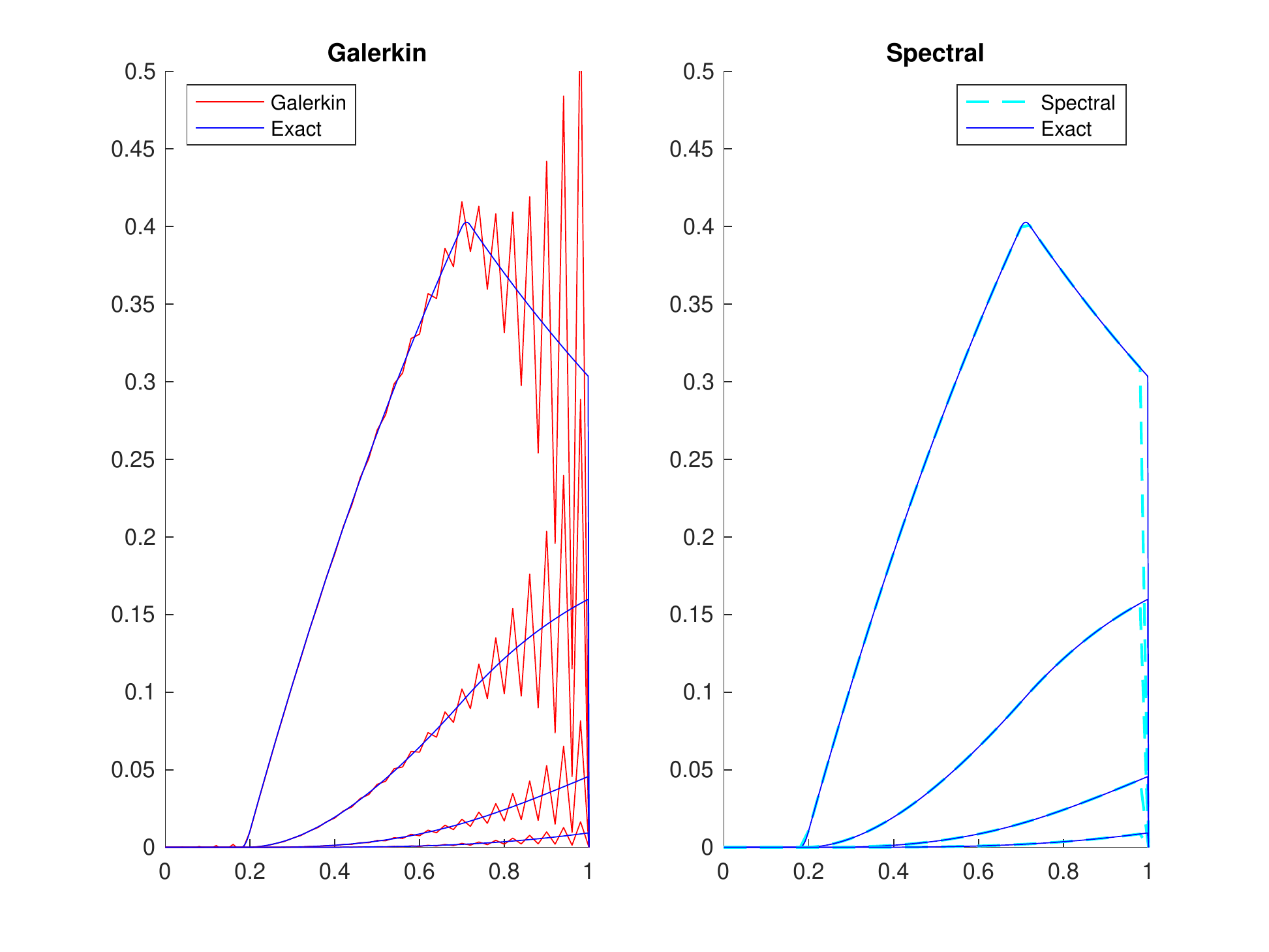}
\hfill
\includegraphics[width=8.42cm]{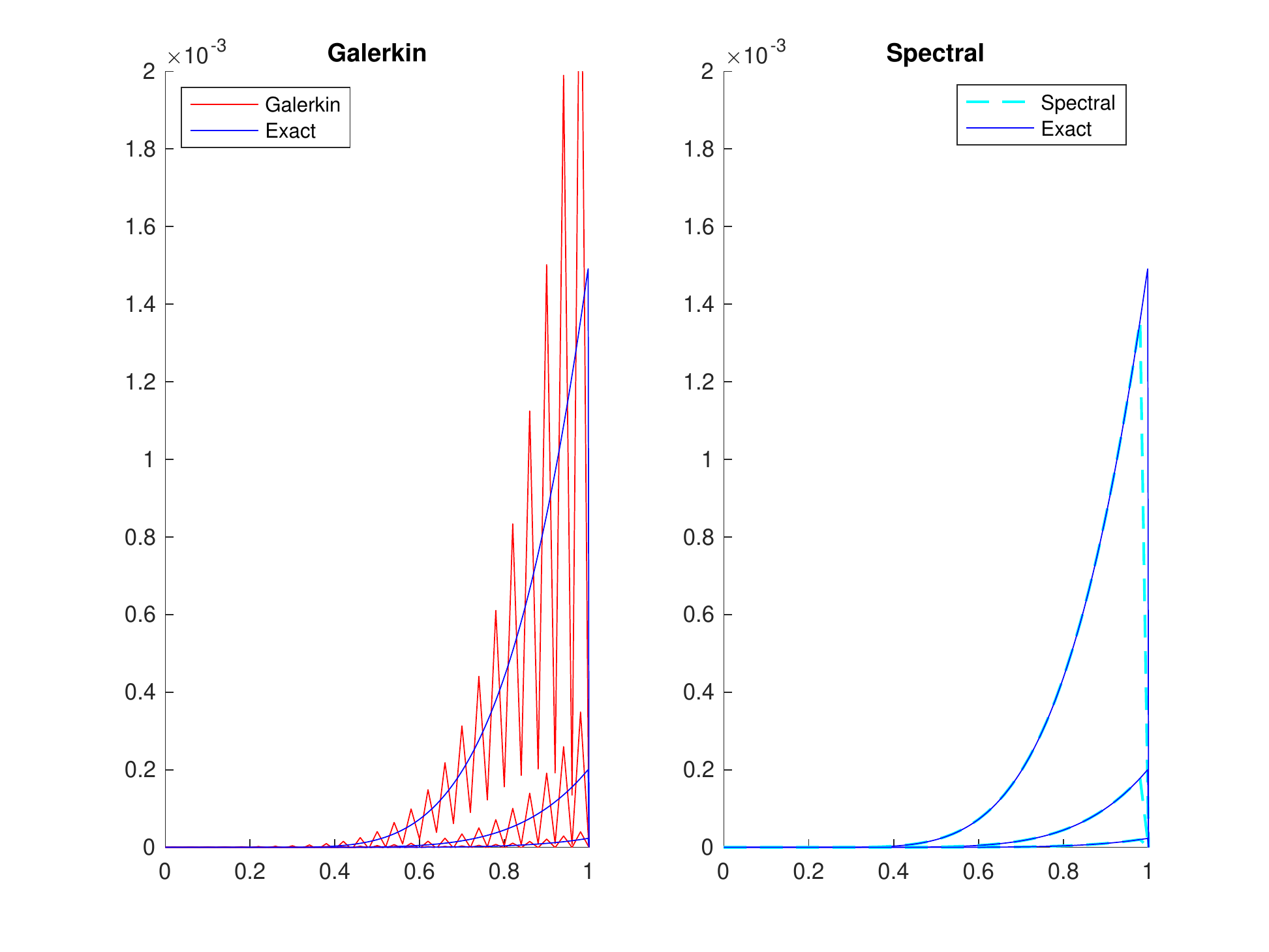} $(c)$
\vfill
\includegraphics[width=8.42cm]{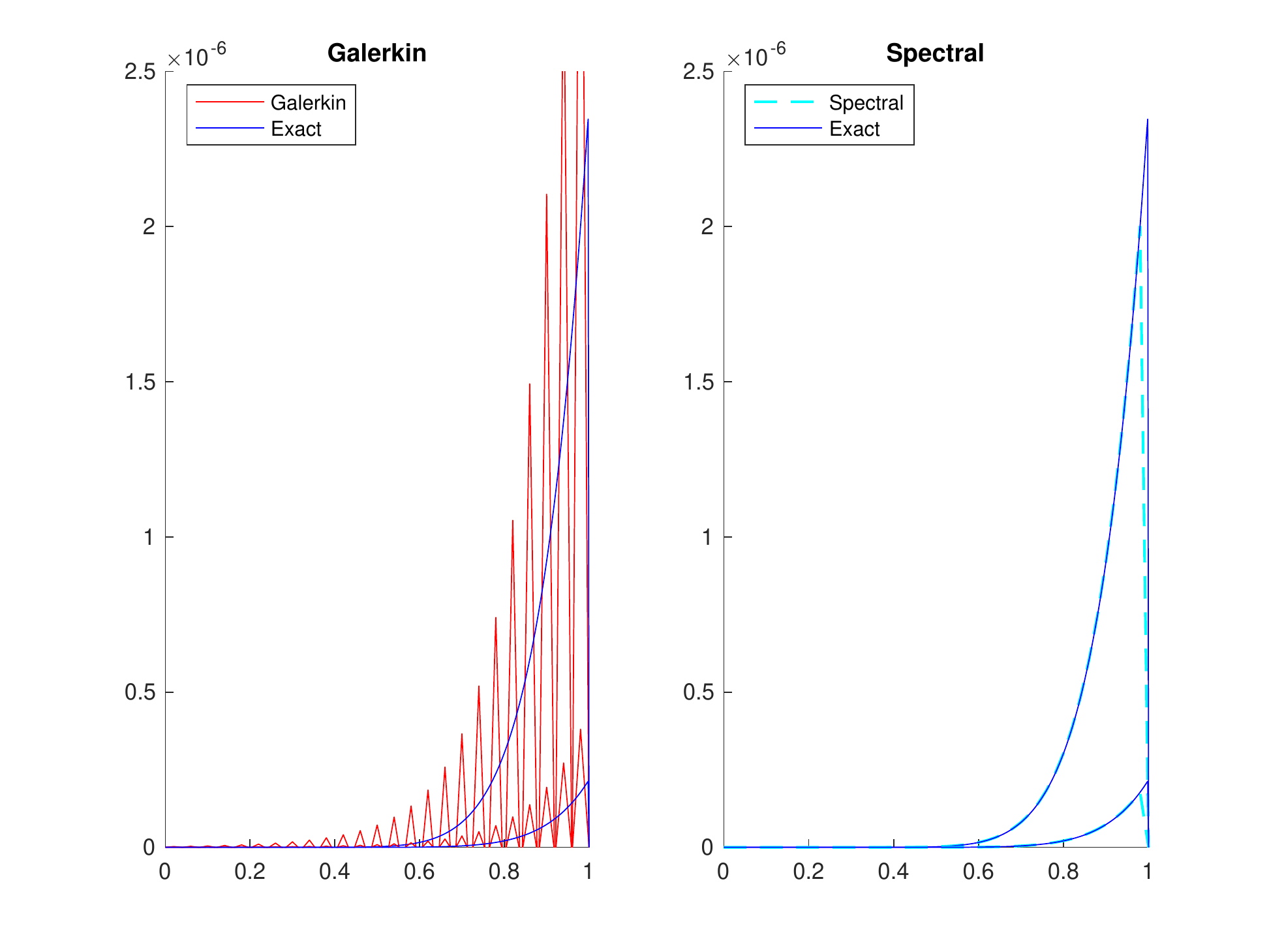}
\caption{Solution of problem \eqref{eq:cd} for $a=1000,  \mu = 1,  f=0$ and $u_0$ given by (\ref{solexacta}) with
$\Delta t=10^{-3}$ and $h=0.02$ ($P = 10$, $S = 2.5$). The spectral VMS solution is compared to the exact solution and the Galerkin solution.  The results for time-steps numbers 1 to 4, 5 to 7 and 8 to 9 are respectively represented in figures $(a)$, $(b)$ and $(c)$.}
\label{fig_test1}
\end{figure}

%\vspace{0.2cm}
%{\bf  First time step}
%
%\noindent
As the discrete solution coincides at the grid nodes with the exact solution of the implicit Euler semi-discretisation and $u^0$ is exact, then $u^1_h$ should coincide with the exact solution at grid nodes. This can already be observed in Figure \ref{fig_test1} $(a)$. We also test this result with different discretisation parameters. We actually set $\Delta t = 10^{-5}$ and $h=0.02$ that corresponds to $P=10$ and $S=0.025$. The solution in the first time-step is represented in Figure \ref{fig_test2} $(a)$ and a zoom around $x=0.7$ in depicted in $(b)$.  Indeed the discrete solutions coincides with the exact one at grid nodes.
\begin{figure}[h!]
$(a)$ \hspace{8cm} $(b)$
\centering
\includegraphics[width=8.42cm]{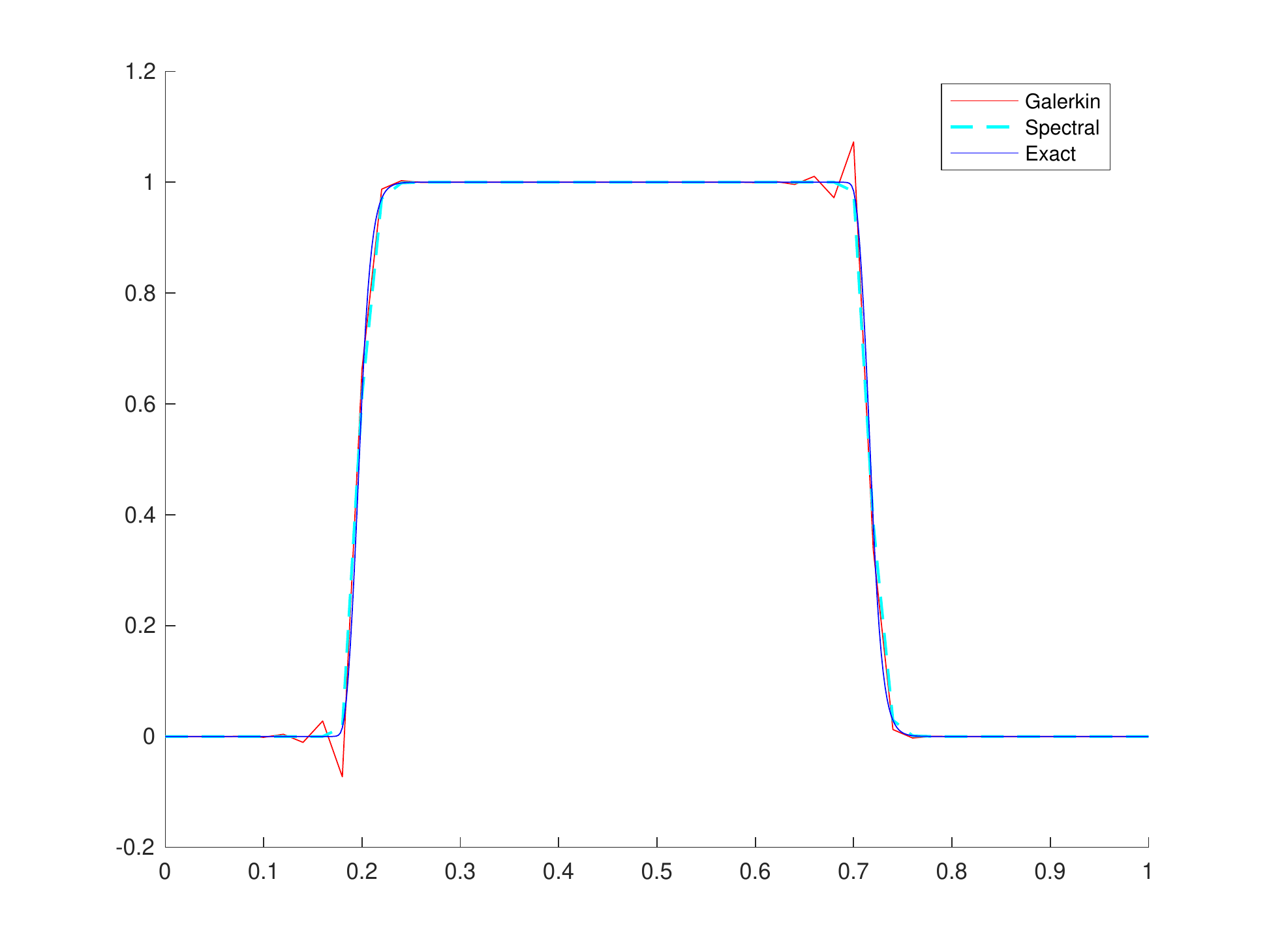}
\hfill
\includegraphics[width=8.42cm]{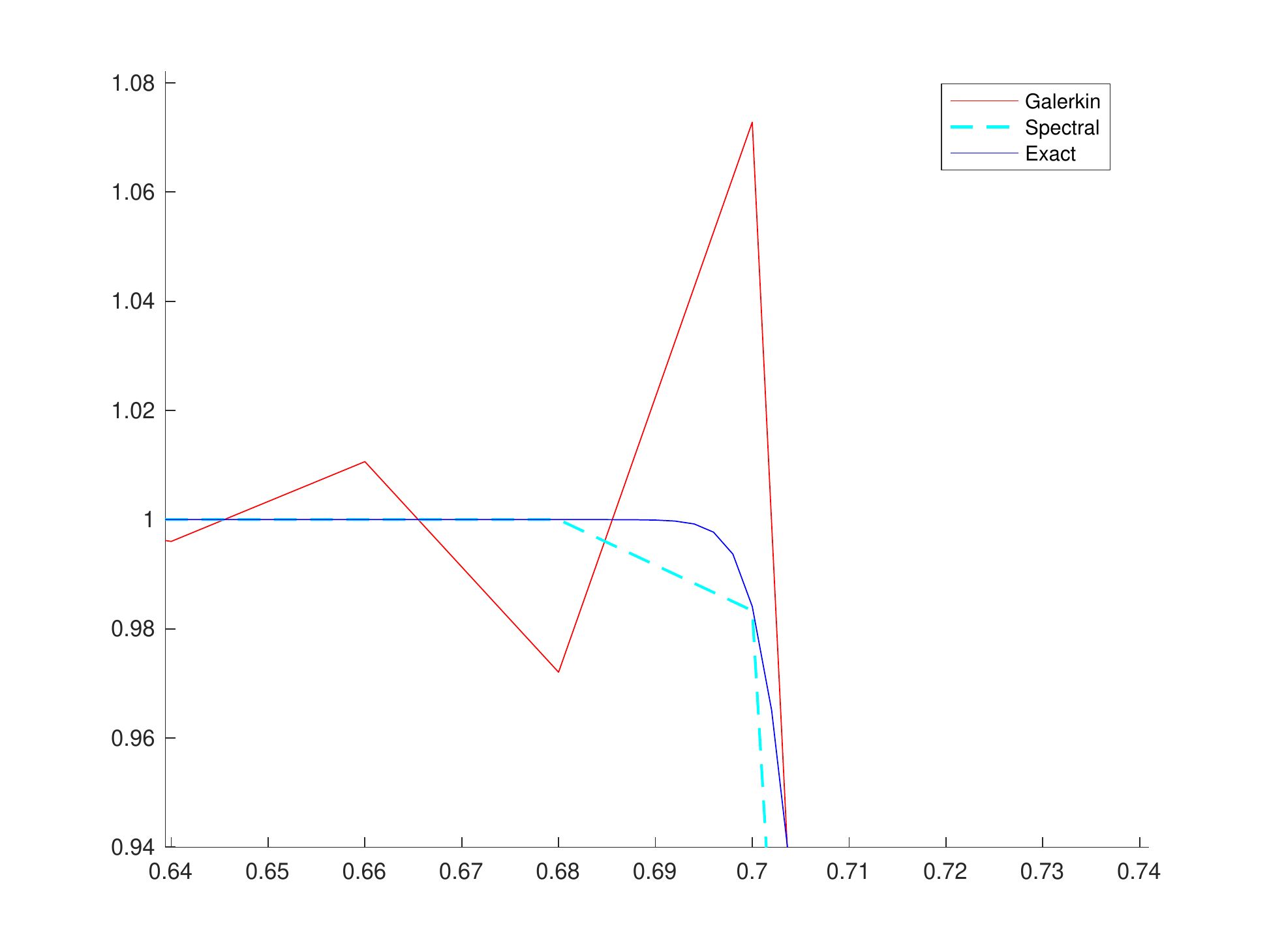}
%%\vfill
%%\includegraphics[width=8.42cm]{imagenes/fig06c.eps}
\caption{Solution of problem \eqref{eq:cd} for $a=1000,  \mu = 1,  f=0$ and $u_0$ given by (\ref{solexacta})
with $\Delta t=10^{-5}$ and $h=0.02$ ($P=10$, $S=0.025$).  The spectral VMS solution is compared to the exact solution and the Galerkin solution at first time step.  Figures $(a)$ and $(b)$ respectively show these solutions in the whole domain and a zoom around $x=0.7$. }
\label{fig_test2}
\end{figure}

\vspace{0.2cm}
\noindent {\bf Very small time steps}

\noindent We test here the arising of spurious oscillations due to extra small time-steps. These spu\-rious oscillations occur in the solutions provided by the Galerkin discretisation when $CFL <CFL_{bound}  = P/(3(1-P))$ (see \cite{hauke}). For that, we consider the same problem as in this section but with $a=20$,  $h=0.01$ and the time-step $\Delta t$ is chosen such that $CFL/CFL_{bound}= 1/2$.  We obtain the results shown in Figure \ref{fig_test3}, where we have represented the first five time-steps.  As one can see the spectral solution does not present any oscillation.
\begin{figure}[h!]
\centering
\includegraphics[width=10cm]{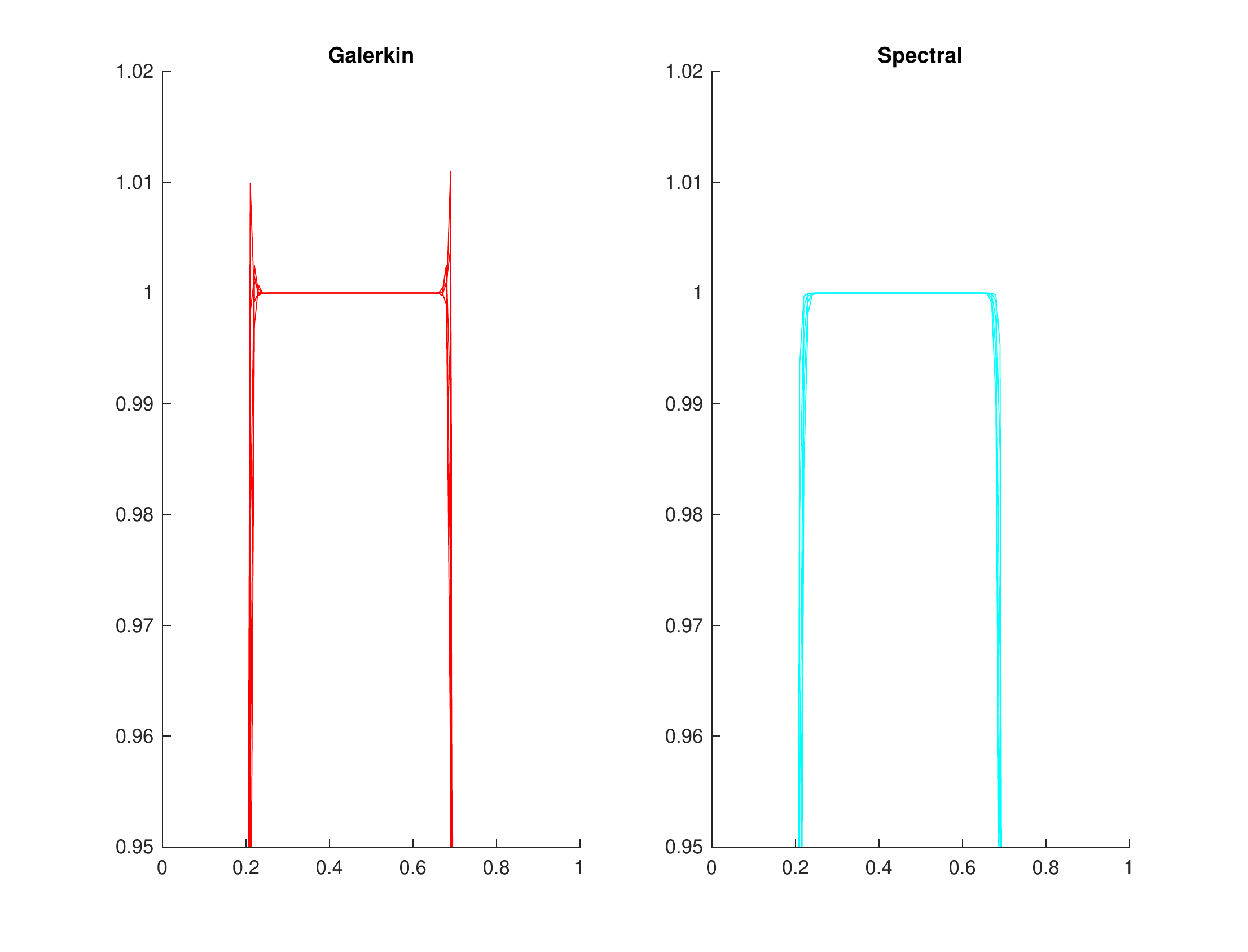}
\caption{Solution of problem \eqref{eq:cd} for $a=20,  \mu = 1,  f=0$ and $u_0$ given by (\ref{solexacta})
with $h=0.01$ and $\Delta t$ such that $CFL/CFL_{bound}= 1/2$ ($P=0.1$, $S=0.0926$).  Red lines represent Galerkin solution and cyan lines represent spectral VMS solution  in each step-time.}
\label{fig_test3}
\end{figure}

\subsection{Test 3: Accuracy of the feasible spectral VMS method. \\ Comparison with other stabilised methods}
We next proceed to compare the results obtained with the feasible spectral VMS method \eqref{eq:SVMS_F} with those obtained by several stabilised methods.

Stabilised methods add specific stabilising terms to the Galerkin discretisation, generating the following matrix scheme,
\begin{equation*}
(M+\Delta t \, R^n + \Delta t \, a^2 \, \tau \, M_{s}) \, \mathbf{u}^{n+1} = M \, \mathbf{u}^n,
\end{equation*}
where $M$ and $R^n$ are, respectively, mass and stiffness matrices, while $M_{s}$ is a tridiagonal matrix defined by $({M_s})_{i,i} = \frac{2}{h}, \quad ({M_s})_{i+1,i}=({M_s})_{i,i+1} = -\frac{1}{h}$. Each  stabilised method is determined by the stabilised coefficient $\tau$. In particular,  we consider:
\begin{enumerate}
\item The optimal stabilisation coefficient for 1D steady advection-diffusion equation \cite{christie,johnnovo},
\begin{equation*}
%\label{taugen}
\tau_{1D} = \displaystyle\frac{\mu}{|a|^2} (P\coth(P)-1).
\end{equation*}
\item The stabilisation coefficient based on orthogonal sub-scales  proposed by Codina in \cite{codina12},
\begin{equation*}
%\label{taucodina}
\tau_{C}= \displaystyle \left(  \left( 4 \frac{\mu}{h^2} \right)^2 + \left( 2 \frac{|a|}{h} \right)^2 \right)^{-1/2}.
\end{equation*}
\item The stabilisation coefficient based on $L_2$ proposed by Hauke et. al. in  \cite{haukee},
\begin{equation*}
\tau_H = \displaystyle \min \left\{ \frac{h}{\sqrt{3}|a|} ,  \frac{h^2}{24.24\mu} , \Delta t \right\}.
\end{equation*}
\item The stabilisation coefficient separating the diffusion-dominated from the convection-dominated regimes proposed by Franca in \cite{solmactom2},
\begin{equation*}
\tau_{F}= \d\frac{h}{|a|}\, \min\{P,\tilde{P}\},
\end{equation*}
where $\tilde{P}>0$ is a threshold separating the diffusion dominated ($P \le \tilde{P}$) to the advection dominated
($Pe > \tilde{P}$) regimes.
\end{enumerate}
In figures \ref{fig_comp1}, \ref{fig_comp2} and \ref{fig_comp3},  we show the solutions of each method for different values of $P$ and $S$, always for advection-dominated regime $P>1$. We also display the errors in $l^{\infty}(L^2)$ and $l^2(H^1)$ norms for the solutions of these problems in tables 1, 2 and 3. As it can be observed in the three tables,  spectral method reduces the error between 10 and 100 times compared to the stabilised methods, without presenting oscillations.

\begin{figure}[h!]
\centering
\includegraphics[width=9.8cm]{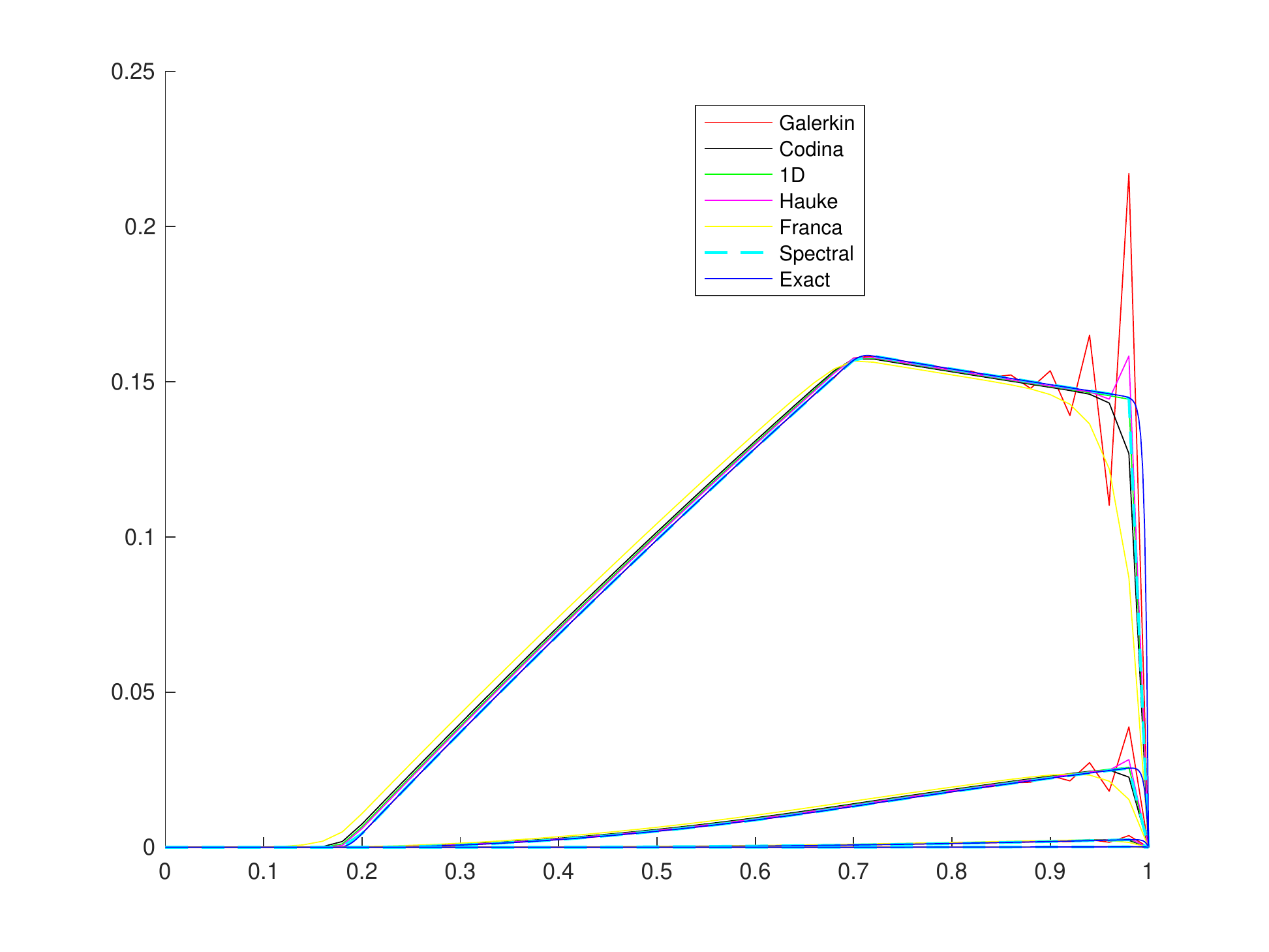}
% $a=300, \mu=1, f=0$ and $u_0$ given by (\ref{solexacta}).
\caption{Comparison of different stabilised methods to solve problem \eqref{eq:cd} when $P=3$, $S=25$ with $\Delta t=10^{-2}$ and $h=0.02$. Solutions in the three first time-steps. }
\label{fig_comp1}
\end{figure}
\begin{table}[h!]
\centering
%\resizebox{8cm}{!} {
\begin{tabular}{|l|l|r|r|}
\hline
& Method    & \multicolumn{1}{l|}{$l^{\infty}(L^2)$} & \multicolumn{1}{l|}{$l^2(H^1)$} \\ \hline \hline
\multirow{6}{*}{\begin{tabular}[c]{@{}l@{}}$P=3$\\[0.2cm] $S=25$\end{tabular}}
& Galerkin &   1.1784e-02   & 4.7505e-02              \\ \cline{2-4}
& Spectral & 8.7889e-06          & 5.4716e-05              \\ \cline{2-4}
& Codina    & 3.2285e-03         & 1.4329e-02              \\ \cline{2-4}
& 1D & 1.3805e-03        & 1.3446e-03               \\ \cline{2-4}
& Hauke     & 2.1713e-03         & 1.1124e-02            \\ \cline{2-4}
& Franca   & 9.9020e-03       & 5.0380e-02            \\ \hline
\end{tabular}
\caption{$l^{\infty}(L^2)$ and $l^2(H^1)$ errors for the solutions represented in Figure \ref{fig_comp1}. }
\end{table}
\begin{figure}[h!]
\centering
\includegraphics[width=8.42cm]{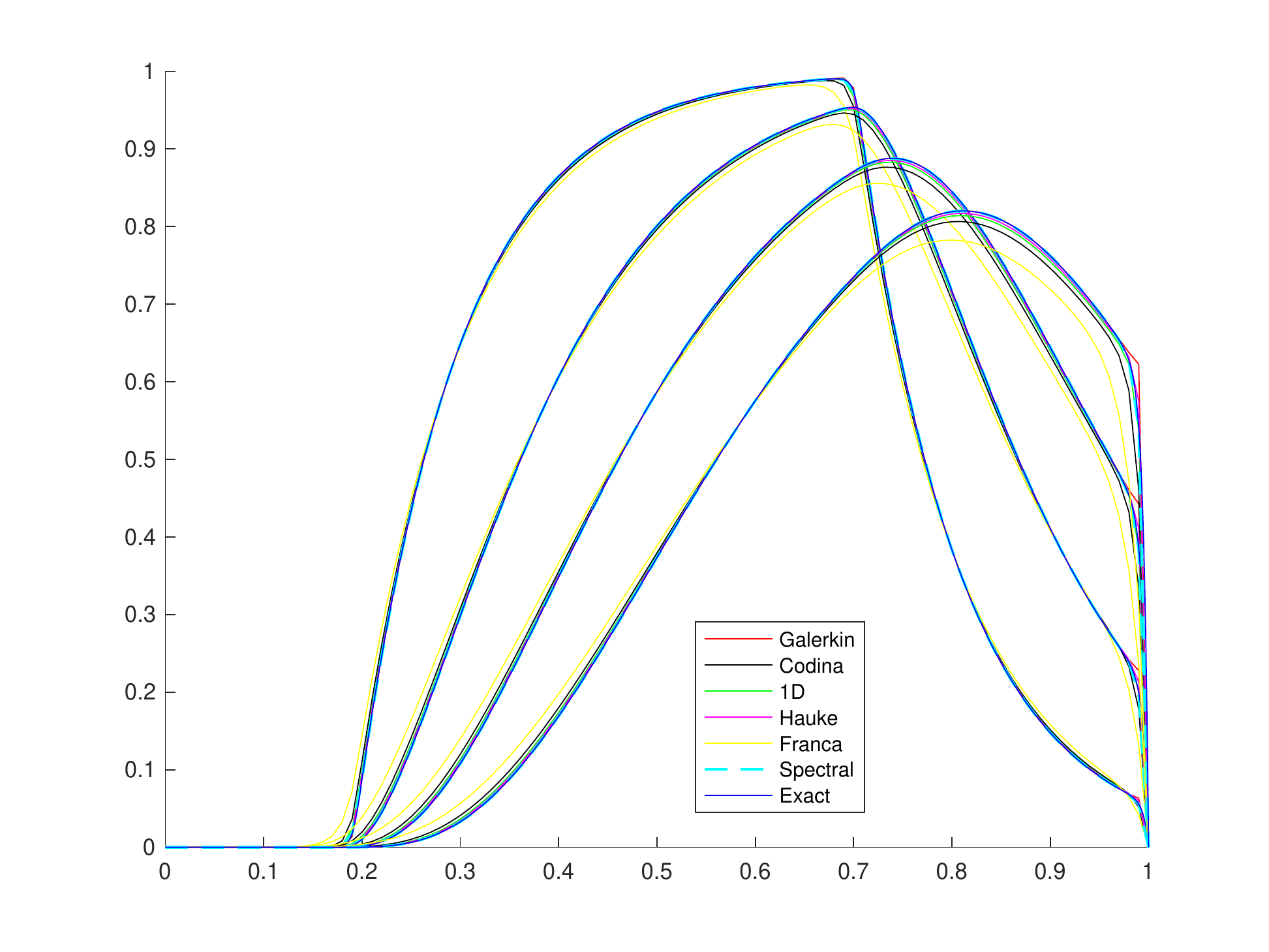}
\hfill
\includegraphics[width=8.42cm]{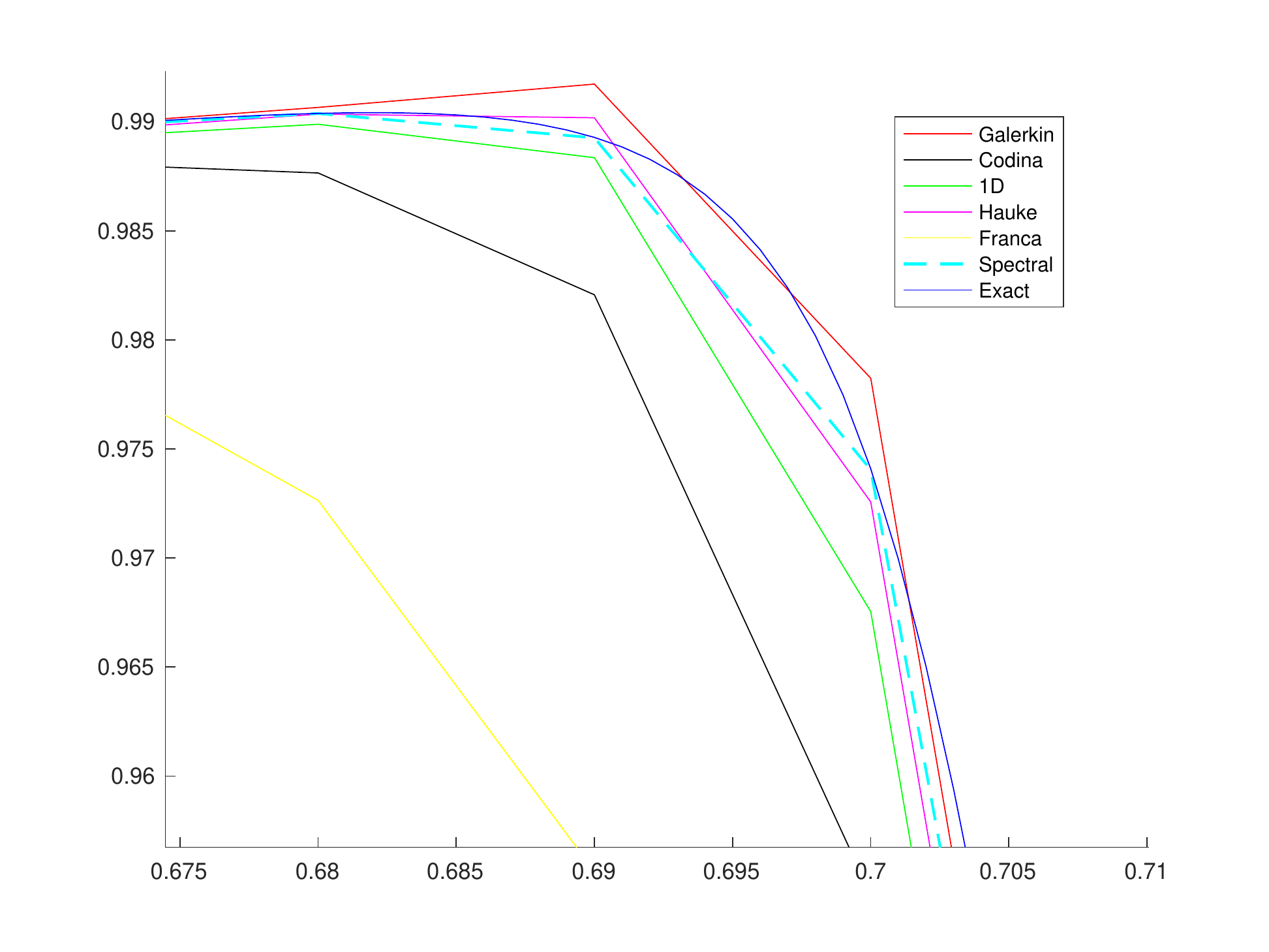}
%$a=100, \mu=0.5, f=0$ and $u_0$ given in \eqref{solexacta}.
\caption{Comparison of different stabilised methods to solve problem \eqref{eq:cd} when $P=1$ and $S=5$ with $\Delta t=10^{-3}$ and $h=10^{-2}$. Solutions in the three first time-steps. Right: zoom around $x=0.7$. }
\label{fig_comp2}
\end{figure}
\begin{table}[h]
\centering
\begin{tabular}{|l|l|r|r|}
\hline
& Method    & \multicolumn{1}{l|}{$l^{\infty}(L^2)$} & \multicolumn{1}{l|}{$l^2(H^1)$} \\ \hline \hline
\multirow{6}{*}{\begin{tabular}[c]{@{}l@{}}$P=1$\\[0.2cm] $S=5$\end{tabular}}
& Galerkin &        9.6551e-03    & 7.7424e-02               \\ \cline{2-4}
& Spectral & 7.2887e-05          & 5.2396e-04               \\ \cline{2-4}
& Codina    & 1.3580e-02         & 6.4992e-02               \\ \cline{2-4}
& 1D & 3.7524e-03        & 5.3902e-03               \\ \cline{2-4}
& Hauke     & 4.2353e-03          & 3.3330e-02            \\ \cline{2-4}
& Franca   & 4.4200e-02      & 3.1419e-01            \\ \hline
\end{tabular}
\caption{$l^{\infty}(L^2)$ and $l^2(H^1)$  errors for the solutions represented in Figure \ref{fig_comp2}. }
\end{table}
\begin{figure}[h!]
\centering
\includegraphics[width=8.42cm]{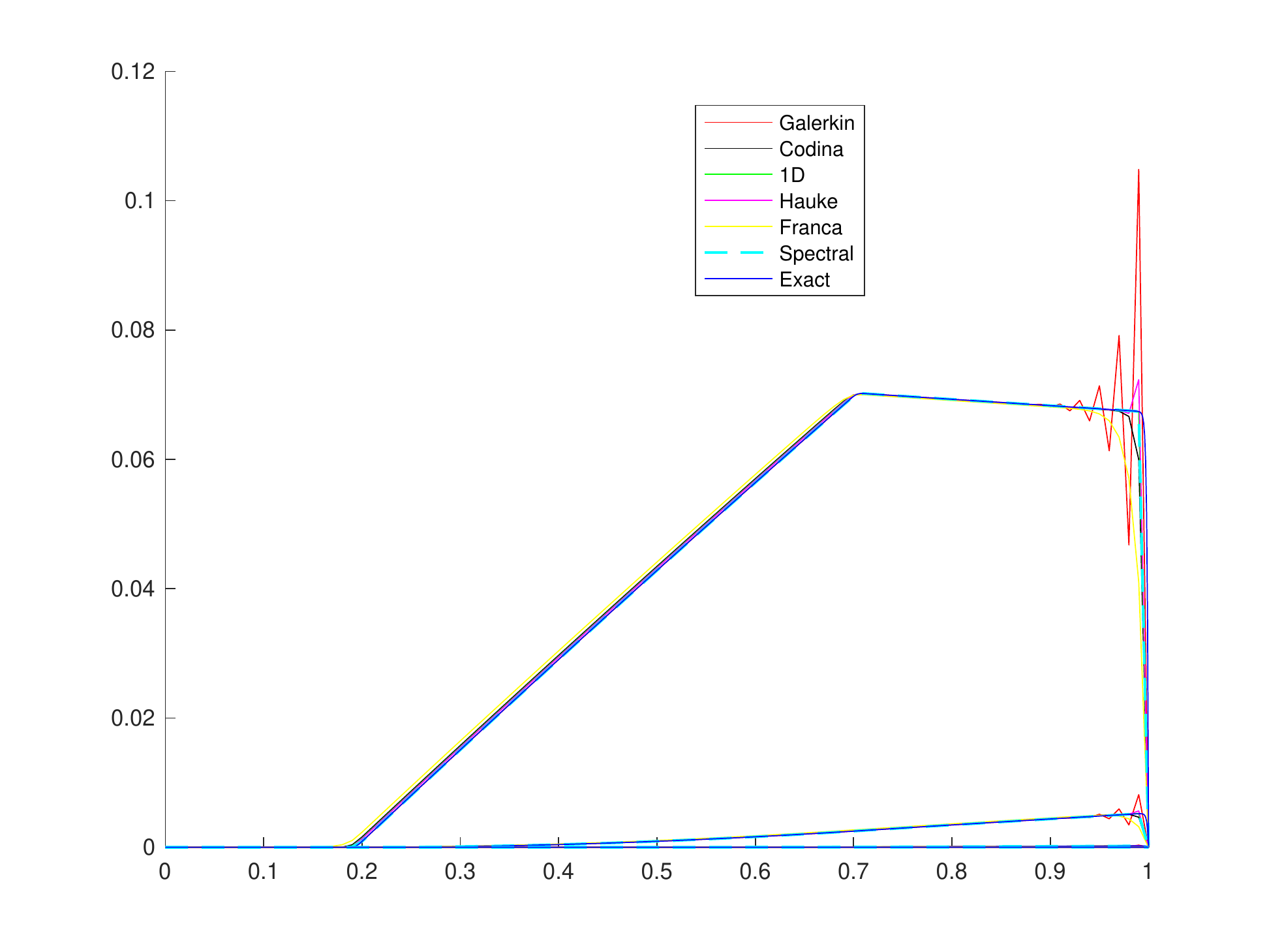}
\hfill
\includegraphics[width=8.42cm]{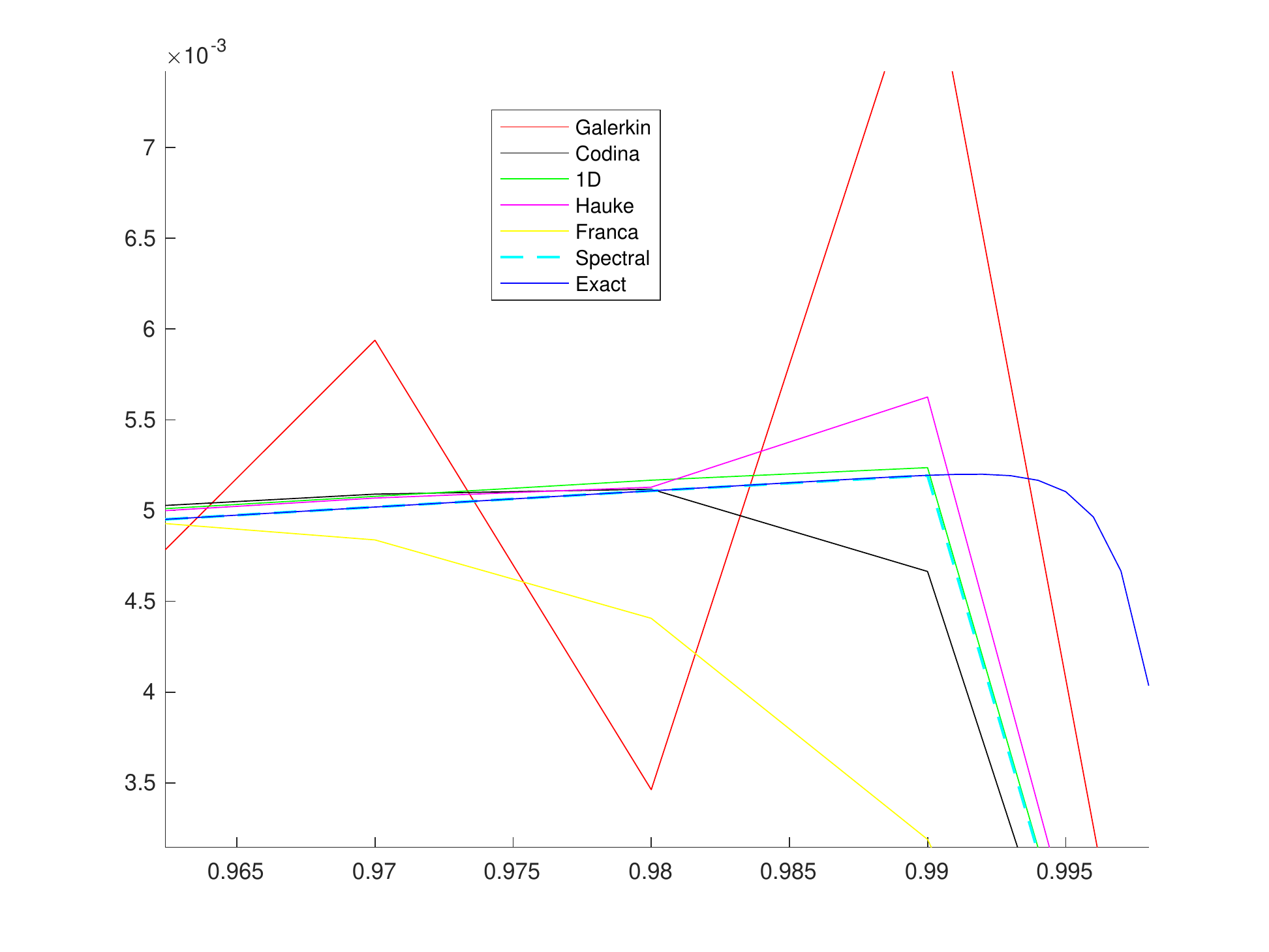}
% $a=700, \mu=1, f=0$ and $u_0$ given in \eqref{solexacta}
\caption{Comparison of different stabilised methods to solve problem \eqref{eq:cd} when $P=3.5$ and $S=100$ with $\Delta t = 10^{-2}$ and $h=10^{-2}$.  Solutions in the three first time-steps. Right: zoom around $x=0.99$.}
\label{fig_comp3}
\end{figure}
\begin{table}[h!]
\centering
\begin{tabular}{|l|l|r|r|}
\hline
& Method    & \multicolumn{1}{l|}{$l^{\infty}(L^2)$} & \multicolumn{1}{l|}{$l^2(H^1)$} \\ \hline \hline
\multirow{6}{*}{\begin{tabular}[c]{@{}l@{}}$P=3.5$\\[0.2cm]$S=100$\end{tabular}}
& Galerkin &        4.5006e-03    & 3.3305e-02               \\ \cline{2-4}
& Spectral & 1.6381e-06          & 2.0138e-05               \\ \cline{2-4}
& Codina    & 8.752e-04        & 8.4455e-03               \\ \cline{2-4}
& 1D & 3.3968e-04        & 4.5556e-04               \\ \cline{2-4}
& Hauke     & 5.6656e-04        & 5.6930e-03            \\ \cline{2-4}
& Franca   & 3.0238e-03      & 3.0336e-02            \\ \hline
\end{tabular}
\caption{$l^{\infty}(L^2)$ and $l^2(H^1)$  errors for the solutions represented in Figure \ref{fig_comp3}. }
\end{table}

Next, we consider the same tests performed in Section \ref{nse:SVMS}, but applying the feasible spectral VMS method. 

Firstly, we check the behaviour of the feasible spectral VMS method \eqref{eq:SVMS_F} for very large P\'eclet numbers.  In Figure \ref{fig_comp_1} we represent the solution of same problem as in Figure \ref{fig_test1}
obtained with this method. We show solutions in time-steps 1 to 4 in $(a)$, times-steps 5 to 7 in $(b)$ and time-steps 8 and 9 in $(c)$. As we can observe, the spectral method is the closest to the reference solution without presenting any spurious oscillations.
%\begin{figure}[h!]
%\centering
%\small $(a)$ \hspace{8cm} $(b)$
%\normalsize
%\includegraphics[width=8cm]{imagenes/comp6pasos1y4.eps}
%\hfill
%\includegraphics[width=8cm]{imagenes/comp6pasos5y6.eps} \\$(c)$
%\vfill
%\includegraphics[width=8cm]{imagenes/comptest6c.eps}
%\caption{Solution of problem \eqref{eq:cd} for $a=1000,  \mu = 1,  f=0$ and $u_0$ given by (\ref{solexacta}) with
%$\Delta t=10^{-3}$ and $h=0.02$ ($P = 10$, $S = 2.5$). The feasible spectral VMS solution is compared with different stabilised methods. The results for time-steps numbers 1 to 4, 5 to 7 and 8 to 9 are respectively represented in figures $(a)$, $(b)$ and $(c)$.}
%\label{fig_comp_1}
%\end{figure}
%
\begin{figure}[h!]
\centering
\small $(a)$ \hspace{8cm} $(b)$
\normalsize
\includegraphics[width=8.42cm]{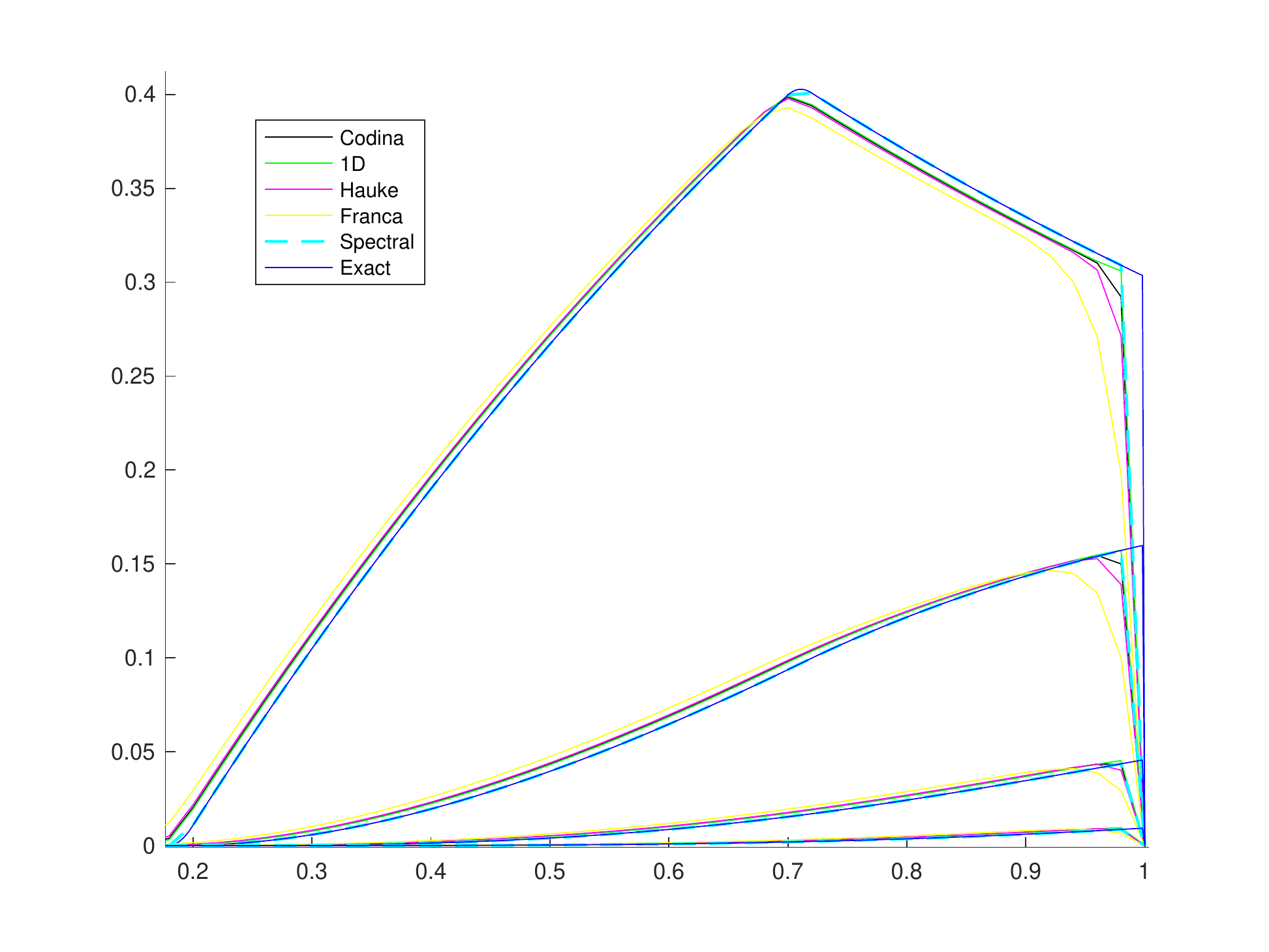}
\hfill
\includegraphics[width=8.42cm]{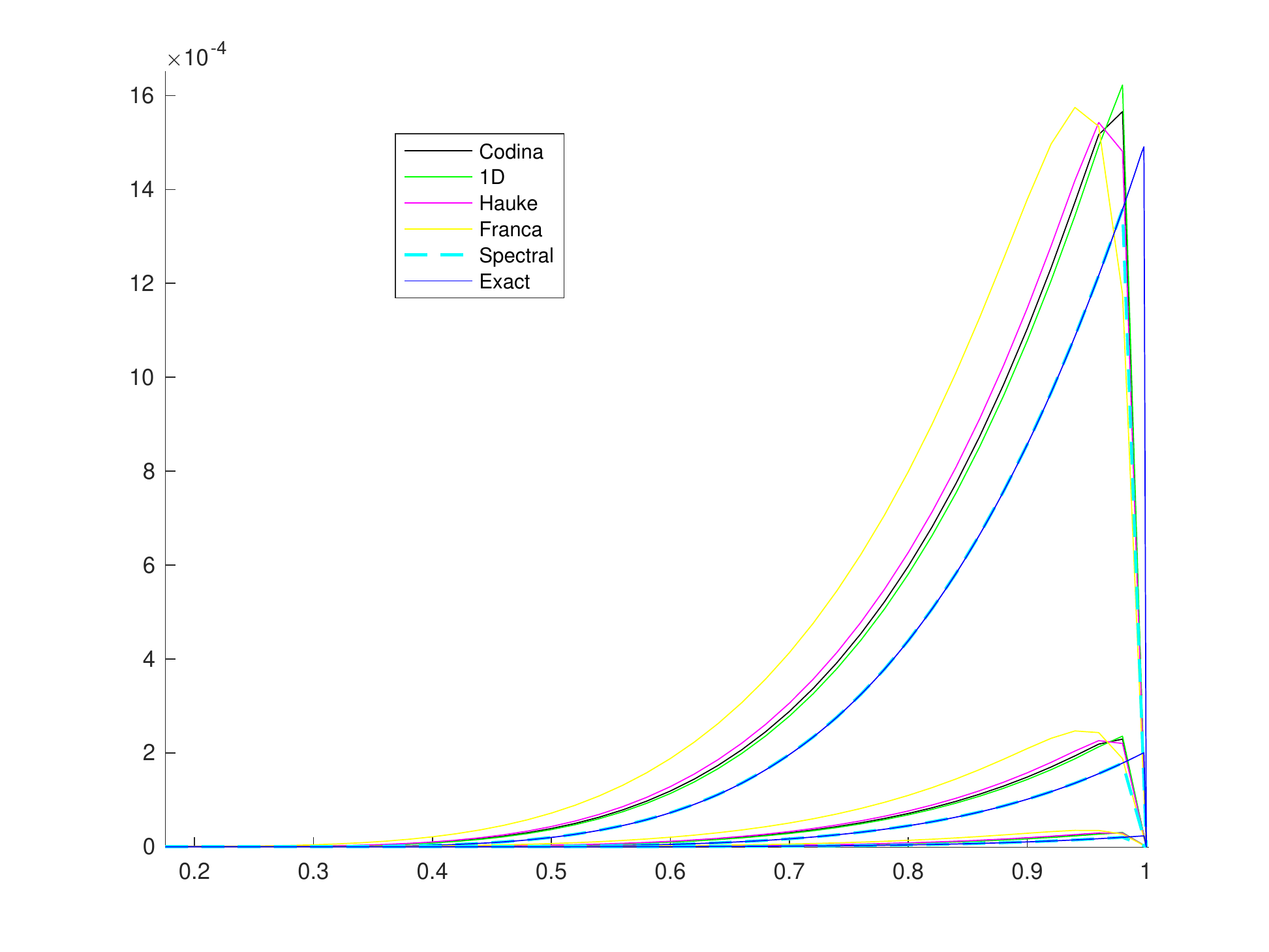}
$(c)$
\vfill
\includegraphics[width=9cm]{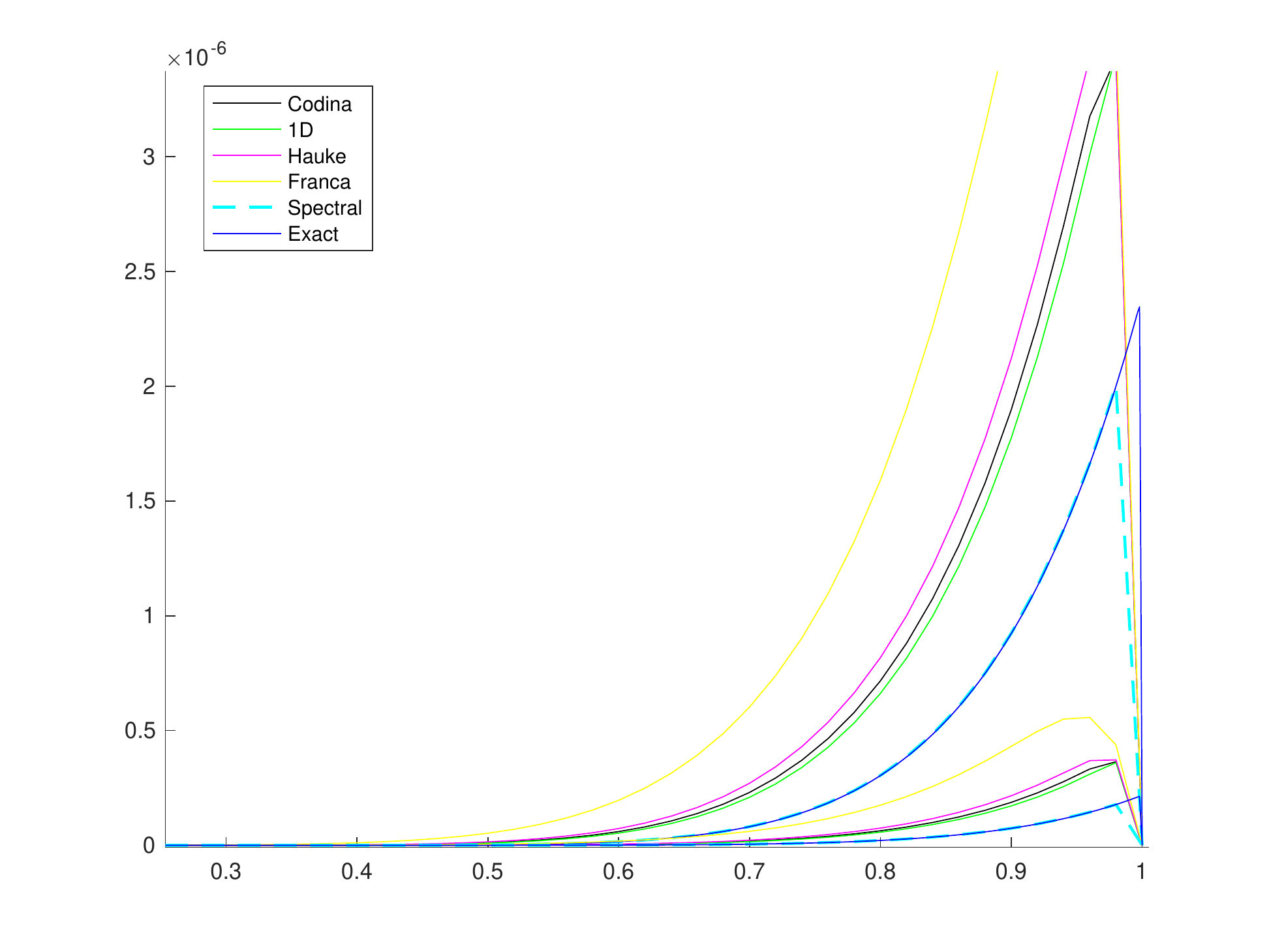}
\caption{Solution of problem \eqref{eq:cd} for $a=1000,  \mu = 1,  f=0$ and $u_0$ given by (\ref{solexacta}) with
$\Delta t=10^{-3}$ and $h=0.02$ ($P = 10$, $S = 2.5$). The feasible spectral VMS is compared with different stabilised methods. The results for time-steps numbers 1 to 4, 5 to 7 and 8 to 9 are respectively represented in figures $(a)$, $(b)$ and $(c)$.}
\label{fig_comp_1}
\end{figure}

Secondly, Figure \ref{fig_comp_2} is the analogous to Figure \ref{fig_test2}, but comparing the feasible spectral VMS with different stabilised methods. Although Hauke's solution is closer to the exact solution than the spectral method, we can see on the right figure that this approximation does not satisfy the Maximun Principle.
\begin{figure}[h!]
\centering
\includegraphics[width=8.42cm]{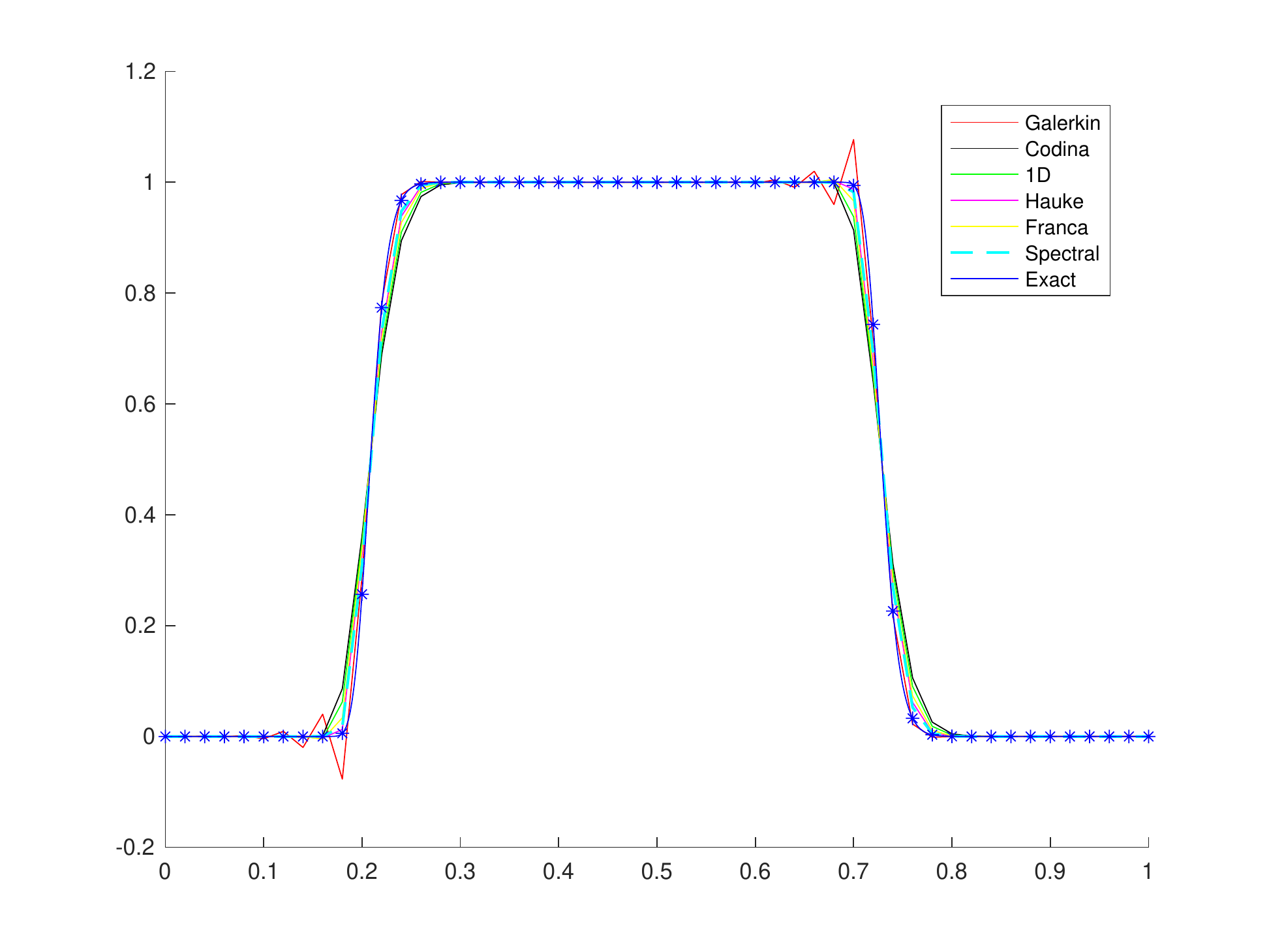}
\hfill
\includegraphics[width=8.42cm]{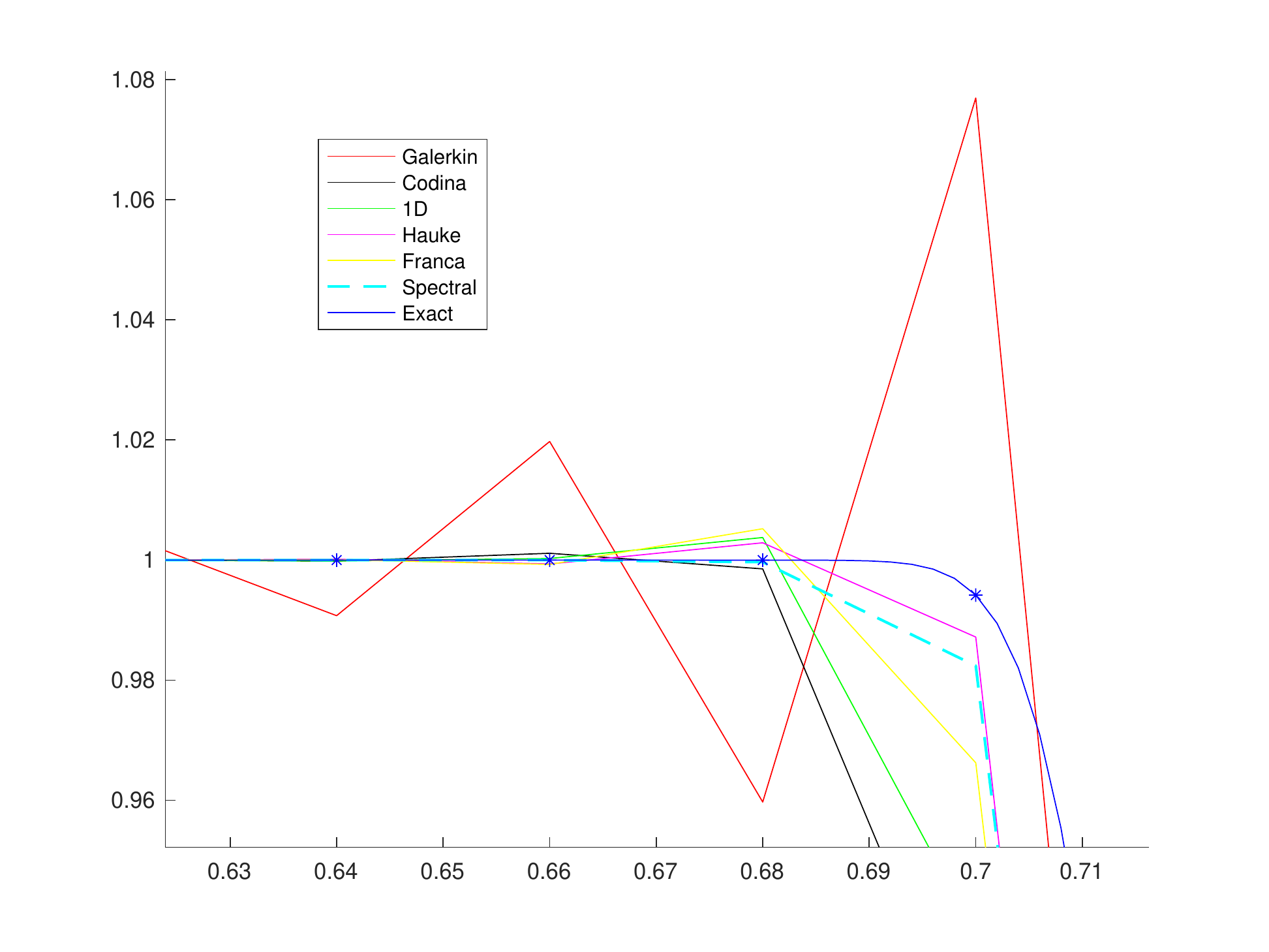}
%%\vfill
%%\includegraphics[width=8.42cm]{imagenes/fig06c.eps}
\caption{First time-step solution of problem \eqref{eq:cd} for $a=1000,  \mu = 1,  f=0$ and $u_0$ given by (\ref{solexacta}) with $\Delta t=10^{-5}$ and $h=0.02$ ($P=10$, $S=0.025$).  The feasible spectral VMS is compared with different stabilised methods in the whole domain $\Omega=(0,1)$ in $(a)$ and in a zoom around $x=0.7$ in $(b)$. }
\label{fig_comp_2}
\end{figure}

Finally, we illustrate the fact that the feasible spectral VMS method is the only method among those studied that does not have oscillations for small time steps, when $CFL <CFL_{bound}$.  In Figure \ref{fig_comp_3}, we can see the first five time-steps solutions obtained with each method using a time-step that verifies $CFL/CFL_{bound} = 1/2$.
\begin{figure}[h!]
\centering
\includegraphics[width=16cm]{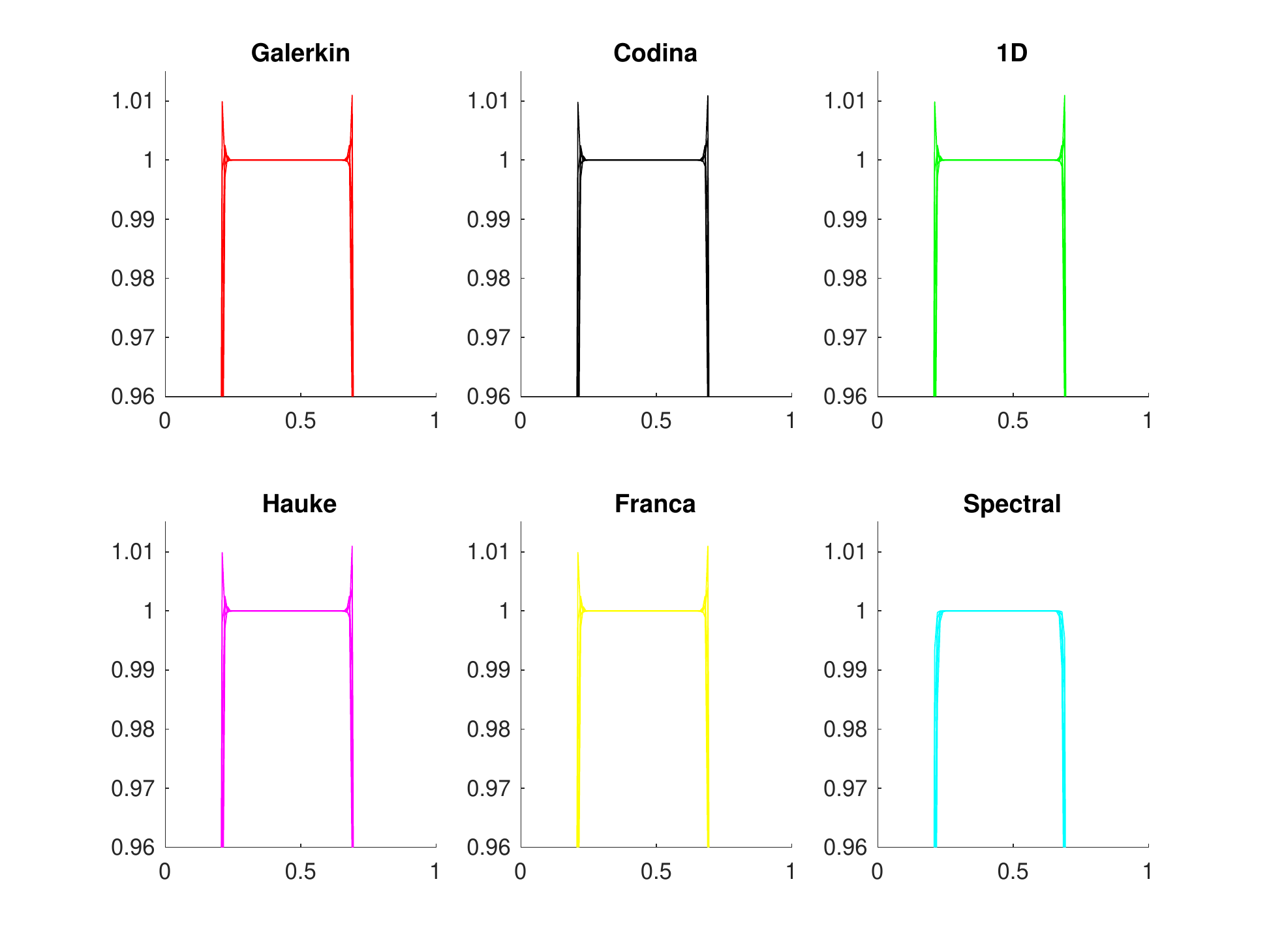}
\caption{Solution of problem \eqref{eq:cd} for $a=20,  \mu = 1,  f=0$ and $u_0$ given by (\ref{solexacta})
with $h=0.01$ and $\Delta t$ such that $CFL/CFL_{bound}= 1/2$ ($P=0.1$, $S=0.0926$).  The feasible spectral VMS is compared with different stabilised methods. }
\label{fig_comp_3}
\end{figure}

Regarding computing times, by means of the offline/online strategy the feasible spectral VMS method requires somewhat larger computing times than the remaining stabilised methods, due to the interpolation step to build the system matrices.

\section{Conclusions} \label{se:conclusions}
In this paper we have extended to parabolic problems the spectral VMS method developed in \cite{ChaconDia} for elliptic problems. We have constructed a feasible method to solve the evolutive advection-diffusion problem by means of an offline/online strategy that pre-computes the effect of the sub-grid scales on the resolved scales.

We have proved that when Lagrange finite element discretisations in space are used, the solution obtained by the fully spectral VMS method \eqref{eq:SVMS} coincides with the exact solution of the implicit Euler semi-discretisation of the advection-diffusion problem at the Lagrange interpolation nodes.

We have performed some numerical tests that have confirmed this property for very large P\'eclet numbers and very small time steps,  by the fully spectral VMS method. Also some additional tests show an improved accuracy with respect to several stabilised methods for the feasible spectral VMS method \eqref{eq:SVMS_F}, with moderate increases of computing times.

The methodology introduced here may be extended to multi-dimensional advection-diffusion equations, by parameterising the sub-grid scales in an off-line step. This research is at present in progress.

\section*{Acknowledgements} The research of T. Chac\'on and I. S\'anchez has been partially funded and that of D. Moreno fully funded  by Programa Operativo FEDER Andaluc\'{\i}a 2014-2020 grant US-1254587. The research of S. Fern\'andez has been partially funded by AEI - Feder Fund Grant RTI2018-093521-B-C31.

\newpage
\section*{Appendix: Matrix formulation of the scheme}
%\label{se: formulacionmatricial}
Problem (\ref{eq:SVMS_F}) is equivalent to a linear system with a particular structure that we describe next. If $\left\{ \varphi_m\right\}_{m=1}^{L+1}$ is a basis of the space $X_h$, the solution $u_{h}^{n+1}$ is obtained as
\begin{equation*}
u_{h}^{n+1} = \d\sum_{m=1}^{L+1} u_m^{n+1}\varphi_m,
\end{equation*}
where $\mathbf{u}^{n+1}=(u_1^{n+1},\hdots,u_{L+1}^{n+1})^t\in\R^{L+1} $ is the unknown vector.
Taking $v_h=\varphi_l$, with $l=1\ldots L$, each term in (\ref{eq:SVMS_F}) can be written in the following way:
$$
\begin{array}{l}
(u^{n+1}_h, \varphi_l) = \d\sum_{m=1}^L (\varphi_m,\varphi_l) \, u_m^{n+1} = \Big( M \, \mathbf{u}^{n+1} \Big)_l
\\[0,5cm]
b^{n+1}(u^{n+1}_h,\varphi_l) = \d\sum_{m=1}^L b^{n+1}(\varphi_m,\varphi_l) \, u_m^{n+1} = \Big( R^{n+1} \, \mathbf{u}^{n+1} \Big)_l
\\[0,5cm]
 (f^{n+1}, \varphi_l) = \Big( \mathbf{F}^{n+1} \Big)_l
\end{array}
$$
where

$(M)_{lm} =(\varphi_m,\varphi_l),$

\vspace{0.2cm}
$(R^{n+1})_{lm} =b^{n+1}(\varphi_m,\varphi_l),$
$$
\begin{array}{l}
(\utnu_h, \varphi_l) =
 \\[0,2cm]
\qquad \quad \d\sum_{m=1}^L \sum_{K\in\mathcal{T}_h} \sum_{j=1}^{\infty}
\beta_j^{n+1,K} \, (\varphi_m,p^{n+1,K} \tilde{z}_j^{n+1,K}) \, (\tilde{z}_j^{n+1,K},\varphi_l) \, u_m^{n}
\\[0,5cm]
\qquad +\d \sum_{K\in\mathcal{T}_h} \sum_{j=1}^{\infty}
 \beta_j^{n+1,K} \, (\tilde{u}_h^n,p^{n+1,K} \tilde{z}_j^{n+1,K}) \, (\tilde{z}_j^{n+1,K},\varphi_l)
\\[0,5cm]
\qquad + \, \Delta t \, \d \sum_{K\in\mathcal{T}_h} \sum_{j=1}^{\infty}
 \beta_j^{n+1,K} \, (f^{n+1},p^{n+1,K} \tilde{z}_j^{n+1,K}) \, (\tilde{z}_j^{n+1,K},\varphi_l)
\\[0,5cm]
\qquad -\d\sum_{m=1}^L \sum_{K\in\mathcal{T}_h} \sum_{j=1}^{\infty}
\beta_j^{n+1,K} \, (\varphi_m,p^{n+1,K} \tilde{z}_j^{n+1,K}) \, (\tilde{z}_j^{n+1,K},\varphi_l) \, u_m^{n+1}
\\[0,5cm]
\qquad - \, \Delta t \, \d\sum_{m=1}^L \sum_{K\in\mathcal{T}_h} \sum_{j=1}^{\infty}
 \beta_j^{n+1,K} \, b^{n+1} (\varphi_m,p^{n+1,K} \tilde{z}_j^{n+1,K}) \, (\tilde{z}_j^{n+1,K},\varphi_l) \, u_m^{n+1}
 \\[0,6cm]
\qquad =\Big( A^{n+1}_1 \, \mathbf{u}^{n} + \mathbf{G}^{n+1}_1 + \Delta t \, \mathbf{F}^{n+1}_1
-A^{n+1}_1 \, \mathbf{u}^{n+1} - \Delta t \, A^{n+1}_2 \, \mathbf{u}^{n+1} \Big)_l,
\end{array}
$$
with
\begin{eqnarray}
\label{a1}
&& (A^{n+1}_1)_{lm} = \d\sum_{K\in\mathcal{T}_h} \sum_{j=1}^{\infty} \beta_j^{n+1,K} \,
(\varphi_m, p^{n+1,K} \tilde{z}_j^{n+1,K})  \,(\tilde{z}_j^{n+1,K}, \varphi_l ),
\\[0,2cm]
\label{a2}
&& (A^{n+1}_2)_{lm}  = \d\sum_{K\in\mathcal{T}_h} \sum_{j=1}^{\infty} \beta_j^{n+1,K} \,
b^{n+1}(\varphi_m, p^{n+1,K} \tilde{z}_j^{n+1,K}) \, (\tilde{z}_j^{n+1,K}, \varphi_l ), 
\\[0,2cm]
%\label{f1}
&& (\mathbf{F}^{n+1}_1)_l =
 \d \sum_{K\in\mathcal{T}_h} \sum_{j=1}^{\infty}
 \beta_j^{n+1,K} \, \langle f^{n+1},p^{n+1,K} \tilde{z}_j^{n+1,K} \rangle \, (\tilde{z}_j^{n+1,K},\varphi_l), \nonumber
\\[0,2cm]
%\label{g1}
&& (\mathbf{G}^{n+1}_1)_l =  \d \sum_{K\in\mathcal{T}_h} \sum_{j=1}^{\infty}
 \beta_j^{n+1,K} \, (\tilde{u}_h^n,p^{n+1,K} \tilde{z}_j^{n+1,K}) \, b^{n+1}(\tilde{z}_j^{n+1,K},\varphi_l),\nonumber
\end{eqnarray}
and
$$
\begin{array}{l}
b^{n+1}(\utnu_h, v_h) =
\\[0,2cm]
\qquad \quad \d\sum_{m=1}^L \sum_{K\in\mathcal{T}_h} \sum_{j=1}^{\infty}
\beta_j^{n+1,K} \, (\varphi_m,p^{n+1,K} \tilde{z}_j^{n+1,K}) \, b^{n+1}(\tilde{z}_j^{n+1,K},\varphi_l) \, u_m^{n}
\\[0,5cm]
\qquad + \d \sum_{K\in\mathcal{T}_h} \sum_{j=1}^{\infty}
\beta_j^{n+1,K} \, (\tilde{u}_h^n,p^{n+1,K} \tilde{z}_j^{n+1,K}) \, b^{n+1}(\tilde{z}_j^{n+1,K},\varphi_l)
\\[0,5cm]
\qquad + \, \Delta t \, \d \sum_{K\in\mathcal{T}_h} \sum_{j=1}^{\infty}
\beta_j^{n+1,K} \, (f^{n+1},p^{n+1,K} \tilde{z}_j^{n+1,K}) \, b^{n+1}(\tilde{z}_j^{n+1,K},\varphi_l)
\\[0,5cm]
\qquad -\d\sum_{m=1}^L \sum_{K\in\mathcal{T}_h} \sum_{j=1}^{\infty}
 \beta_j^{n+1,K} \, (\varphi_m,p^{n+1,K} \tilde{z}_j^{n+1,K}) \, b^{n+1}(\tilde{z}_j^{n+1,K},\varphi_l) \, u_m^{n+1}
 \\[0,5cm]
\qquad - \, \Delta t \, \d\sum_{m=1}^L \sum_{K\in\mathcal{T}_h} \sum_{j=1}^{\infty}
 \beta_j^{n+1,K} \, b^{n+1} (\varphi_m,p^{n+1,K} \tilde{z}_j^{n+1,K}) \, b^{n+1}(\tilde{z}_j^{n+1,K},\varphi_l) \, u_m^{n+1}  \\[0,6cm]
\qquad =\Big(A^{n+1}_3 \, \mathbf{u}^{n} + \mathbf{G}^{n+1}_2+ \Delta t \, \mathbf{F}^{n+1}_2 -
 A^{n+1}_3 \, \mathbf{u}^{n+1} - \Delta t \,A^{n+1}_4 \, \mathbf{u}^{n+1} \Big)_l,
 \end{array}
$$
where
\begin{eqnarray}
\label{a3}
&& (A^{n+1}_3)_{lm} = \d\sum_{K\in\mathcal{T}_h} \sum_{j=1}^{\infty} \beta_j^{n+1,K} \,
(\varphi_m, p^{n+1,K} \tilde{z}_j^{n+1,K}) \, b^{n+1}(\tilde{z}_j^{n+1,K}, \varphi_l ),
\\[0,2cm]
\label{a4}
&& (A^{n+1}_4)_{lm} =\d\sum_{K\in\mathcal{T}_h} \sum_{j=1}^{\infty} \beta_j^{n+1,K} \,
b^{n+1}(\varphi_m, p^{n+1,K} \tilde{z}_j^{n+1,K} \, b^{n+1}(\tilde{z}_j^{n+1,K}, \varphi_l ),
\\[0,2cm]
%\label{f2}
&& (\mathbf{F}^{n+1}_2)_l  =
 \d \sum_{K\in\mathcal{T}_h} \sum_{j=1}^{\infty}
 \beta_j^{n+1,K} \, \langle f^{n+1},p^{n+1,K} \tilde{z}_j^{n+1,K} \rangle \, b^{n+1}(\tilde{z}_j^{n+1,K},\varphi_l),\nonumber
\\[0,2cm]
%\label{g2}
&&(\mathbf{G}^{n+1}_2)_l  =  \d \sum_{K\in\mathcal{T}_h} \sum_{j=1}^{\infty}
 \beta_j^{n+1,K} \, (\tilde{u}_h^n,p^{n+1,K} \tilde{z}_j^{n+1,K}) \, b^{n+1}(\tilde{z}_j^{n+1,K},\varphi_l).\nonumber
\end{eqnarray}

%\vspace{0.5cm}
Taking in account the definition of $\tilde{u}_h^n$ in \eqref{eq:uhtn_F}, the second terms $\mathbf{G}^{n+1}_1$ and 
$\mathbf{G}^{n+1}_2$ can be expressed in the following way: 
$$
\begin{array}{l}
\Big(\mathbf{G}^{n+1}_1 \Big)_l = \d \sum_{K\in\mathcal{T}_h} \sum_{j=1}^{\infty} \beta_j^{n+1,K} \, 
\beta_j^{n,K} \, \langle \hat{R}^n_h({u}^n_h),  p^{n,K} \, \tilde{z}_j^{n,K} \rangle (\tilde{z}_j^{n+1,K},\varphi_l \Big)
\\ [0,6cm]
\qquad = \Big(  B_1^{n+1}  \, \mathbf{u}^{n-1}  +  \Delta t \, \mathbf{F}^{n+1}_3  
- B_1^{n+1}  \, \mathbf{u}^{n} - \Delta t \,  B_2^{n+1}  \, \mathbf{u}^{n}
\Big)_l,
\\ [0,5cm]
\Big(\mathbf{G}^{n+1}_2 \Big)_l = \d \sum_{K\in\mathcal{T}_h} \sum_{j=1}^{\infty} \beta_j^{n+1,K} \, 
\beta_j^{n,K} \, \langle \hat{R}^n_h({u}^n_h),  p^{n,K} \, \tilde{z}_j^{n,K} \rangle \, b^{n+1}(\tilde{z}_j^{n+1,K},\varphi_l \Big)
\\ [0,6cm]
\qquad = \Big(  B_3^{n+1}  \, \mathbf{u}^{n-1}  +  \Delta t \, \mathbf{F}^{n+1}_4  
- B_3^{n+1}  \, \mathbf{u}^{n} - \Delta t \,  B_4^{n+1}  \, \mathbf{u}^{n}
\Big)_l,
\end{array}
$$
where
\begin{eqnarray}
\label{b1}
&&(B^{n+1}_1)_{lm} = \d\sum_{K\in\mathcal{T}_h} \sum_{j=1}^{\infty} \beta_j^{n+1,K} \, \beta_j^{n,K} \,
(\varphi_m, p^{n,K} \tilde{z}_j^{n,K})  \,(\tilde{z}_j^{n+1,K}, \varphi_l ),
\\[0,2cm]
\label{b2}
&&(B^{n+1}_2)_{lm} = \d\sum_{K\in\mathcal{T}_h} \sum_{j=1}^{\infty} \beta_j^{n+1,K} \, \beta_j^{n,K} \,
b^n(\varphi_m, p^{n,K} \tilde{z}_j^{n,K})  \, (\tilde{z}_j^{n+1,K}, \varphi_l ),
\\[0,2cm]
\label{b3}
&& (B^{n+1}_3)_{lm} = \d\sum_{K\in\mathcal{T}_h} \sum_{j=1}^{\infty} \beta_j^{n+1,K} \, \beta_j^{n,K} \,
(\varphi_m, p^{n,K} \tilde{z}_j^{n,K})  \, b^{n+1} (\tilde{z}_j^{n+1,K}, \varphi_l ),
\\[0,2cm]
\label{b4}
&& (B^{n+1}_4)_{lm} = \d\sum_{K\in\mathcal{T}_h} \sum_{j=1}^{\infty} \beta_j^{n+1,K} \, \beta_j^{n,K} \,
b^n(\varphi_m, p^{n,K} \tilde{z}_j^{n,K})  \, b^{n+1} (\tilde{z}_j^{n+1,K}, \varphi_l ),
\\[0,2cm]
&&(\mathbf{F}^{n+1}_3)_l  =
 \d \sum_{K\in\mathcal{T}_h} \sum_{j=1}^{\infty}
 \beta_j^{n+1,K} \, \beta_j^{n,K} \, \langle f^{n},p^{n,K}  \tilde{z}_j^{n,K} \rangle \, (\tilde{z}_j^{n+1,K},\varphi_l),\nonumber
\\[0,2cm]
&& (\mathbf{F}^{n+1}_4)_l  =
 \d \sum_{K\in\mathcal{T}_h} \sum_{j=1}^{\infty}
 \beta_j^{n+1,K} \, \beta_j^{n,K} \, \langle f^{n},p^{n,K} \tilde{z}_j^{n,K} \rangle \, 
 b^{n+1}(\tilde{z}_j^{n+1,K},\varphi_l).\nonumber
\end{eqnarray}

\vspace{0.5cm}

Here we are neglecting the interaction between different eigenfunctions in two consecutive time steps. Obviously, this occurs when the operator is time independent.

\vspace{0.5cm}
\noindent Thus, problem (\ref{eq:SVMS_F}) is equivalent to the lineal system
$$
\mathbf{A}^{n+1} \, \mathbf{u}^{n+1} = \mathbf{b}^{n+1},
$$
where $\mathbf{A}^{n+1}\in \R^{(L+1)\times (L+1)}$ and $\mathbf{b}^{n+1}\in \R^{(L+1)}$ are given by
\begin{equation}
\label{eq:csl}
\barr{l}
\mathbf{A}^{n+1}= M + \Delta t \, R^{n+1} - {\cal A}^{n+1},
\\[0,5cm]
\mathbf{b}^{n+1} =
\Big(  M - (A^{n+1}_1  + \Delta t \, A^{n+1}_3) - (A^{n}_1 + \Delta t \, A^{n}_2) - {\cal B}^{n+1} \Big) \mathbf{u}^n\nonumber
\\[0,4cm]
\qquad + \Big( A^{n}_1 - B_1^{n+1} - \Delta t \, B_3^{n+1} \Big) \, \mathbf{u}^{n-1}
\\[0,5cm]
\qquad + \Delta t \, \mathbf{F}^{n+1} - \Delta t \, \mathbf{F}^{n+1}_1 - \Delta t^2 \, \mathbf{F}^{n+1}_2 +
   \Delta t \, \mathbf{F}^{n}_1 - \Delta t \, \mathbf{F}^{n+1}_3 - \Delta t^2 \, \mathbf{F}^{n+1}_4,
\earr
%\\[0,4cm]
%\qquad - \Big( \mathbf{G}^{n+1}_1 +  \Delta t \, \mathbf{G}^{n+1}_2\Big)
\end{equation}
with 
$${\cal A}^{n+1} = A^{n+1}_1 + \Delta t \, A^{n+1}_2 + \Delta t \, A^{n+1}_3  + \Delta t^2 \, A^{n+1}_4, \qquad 
{\cal B}^{n+1} = B_1^{n+1} - \Delta t \, B_2^{n+1} - \Delta t \, B_3^{n+1} - \Delta t^2 \, B_4^{n+1}.$$

Here, $M$ and $R^{n+1}$ are, respectively, the mass and stiffness matrices from the Galerkin formulation and 
$A_i^{n+1}$ and $B_i^{n+1}$ are the matrices that represent the effect of the small scales component of the solution on the large scales component. 

\newpage


\begin{thebibliography}{00}
\bibitem{asensio} {\sc M. I. Asensio, B. Ayuso, and  G. Sangalli},
{\it Coupling stabilized finite element methods with finite difference time integration for advection-diffusion-reaction problems}, Comput. Methods Appl. Mech. Engrg. {\bf 196} (2007) 3475--3491.

\bibitem{bernardi} {\sc C. Bernardi, Y. Maday, F. Rapetti}, {\it Discr\'etisations variationnelles de probl\`emes aux limites elliptiques}. Math\'ematiques et Applications, Springer. {\bf 45}, 2004. 

%\bibitem{brezzi} {\sc F. Brezzi, L. P. Franca, T. J. R. Hugues and A. Russo},
%{\it $b=\int g$}, Comput. Methods Appl. Mech. Engrg. {\bf 145} (1997) 329--339.

%\bibitem{Brooks_1982}
%{\sc A. N. Brooks, T. J. R. Hughes,}
%{\it Streamline upwind/Petrov-Galerkin formulations for advection dominated flows with particular emphasis on the incompressible Navier-Stokes equations},
%Comput. Methods Appl. Mech. Eng. \textbf{32} (1982) 199--259.

\bibitem{solfergar} {\sc T. Chac\'on Rebollo, S. Fern\'andez-Garc\'{\i}a, M. G\'omez-M\'armol,} {\it Anisotropic VMS solution of advection-diffusion problems by spectral approximation of sub-grid scales Journal of Computational and Applied Mathematics}. J. Comput. Appl. Maths. \textbf{380} (2020) 112959.

\bibitem{codina12} {\sc R. Codina}, {\it Stabilization of incompressibility and advection through orthogonal sub-scales in finite element methods,}
Comput. Methods Appl. Mech. Engrg. \textbf{190} (2000) 1579--1599.

\bibitem{codina2002} {\sc R. Codina}, {\it Stabilized finite element approximation of transient incompressible flows using orthogonal subscales,}
Comput. Methods Appl. Mech. Engrg. \textbf{191} (2002) 4295--4321.

\bibitem{codina2007} {\sc R. Codina, J. Principe, O. Guasch and S. Badia}, {\it Time dependent subscales in the stabilized finite element approximation of incompressible flow problems.}
Comput. Methods Appl. Mech. Engrg. \textbf{196} (2007) 2413--2430.

\bibitem{ChaconDia} {\sc T. Chac\'on Rebollo, B. M. Dia},
{\it A variational multi-scale method with spectral approximation of the sub-scales:Application to the 1D advection-diffusion equations}, Comput. Methods Appl. Mech. Engrg. {\bf 285} (2015) 406--426.

%\bibitem{soltom}
%{\sc T. Chac\'on Rebollo, S. Fern\'andez-Garc\'{\i}a,}
%{\it On the computation of the stabilized coefficients for the
%1D spectral VMS method}. SEMA Journal.  {\bf  75} (2018) 573--590.


\bibitem{solmactom2} {\sc T. Chac\'on Rebollo,  M. G\'omez-M\'armol, M. Restelli}, {\it Numerical Analysis of Penalty Stabilized Finite Element Discretizations of Evolution Navier-Stokes Equations.}
J. Sci. Comput. {\bf  63} (2015) 885-912.

\bibitem{Chaconlibro} {\sc T. Chac\'on Rebollo, R. Lewandowski},
{\it Mathematical and numerical foundations of turbulence models and applications}, Modeling and Simulation in Science, Engineering and Technology, Springer Science+Business Media, New York, 2014.

\bibitem{christie} {\sc I. Christie, D.F. Griffiths, A.R. Mitchell, O.C. Zienkiewicz},  {\it Finite element methods for second order differential equations with significant first derivatives},  Int. J. Numer. Methods Eng. {\bf10} (1976) 1389-1396.

%\bibitem{dauge} {\sc M. Dauge}, {\it Elliptic Boundary Value Problems on Corner %Domains}. Lecture Notes in Mathematics 1341, Springer-Verlag, 1988.

\bibitem{dautray}{\sc R. Dautray, J.L. Lions}, {\it Mathematical Analysis and Numerical Methods for Science and Technology},
Vol. 5, Springer, Berlin, 1992.

%\bibitem{fromm} {\sc S. J. Fromm}, {\it Potential space estimates for Green %potentials in convex domains}. Proc. Amer. Math. Soc. 119, 1 (1993) 225--233.

\bibitem{harari} {\sc I. Harari},
{\it Stability of semidiscrete formulations for parabolic problems at small time steps}, Comput. Methods Appl. Mech. Engrg. {\bf 193} (2004) 1491--1516.

\bibitem{hauke} {\sc I. Harari and G. Hauke},
{\it Semidiscrete formulations for transient transport at small time steps}, Int. Journal for Numerical Methods in Fluids {\bf 54} (2007) 731--743.

\bibitem{fourier1} {\sc I. Harari and M. H. Doweidar},
{\it Fourier analysis of semi-discrete and space-time stabilized methods for the advective-diffusive-reactive equation: I. SUPG } Comput. Methods Appl. Mech. Engrg. {\bf 194} (2005) 45--81.

\bibitem{fourier2} {\sc I. Harari and M. H. Doweidar},
{\it Fourier analysis of semi-discrete and space-time stabilized methods for the advective-diffusive-reactive equation: II. SGS  } Comput. Methods Appl. Mech. Engrg. {\bf 194} (2005) 691--725.

\bibitem{fourier3} {\sc I. Harari and M. H. Doweidar},
{\it Fourier analysis of semi-discrete and space-time stabilized methods for the advective-diffusive-reactive equation: III. SGS/GSGS } Comput. Methods Appl. Mech. Engrg. {\bf 195} (2006) 6158--6176.

\bibitem{haukee} {\sc G.  Hauke,  D. Fuster, M.H. Doweidar},  {\it Variational multiscale a-posteriori error estimation for multi-dimensional transport problems}, Comput. Methods Appl.  Mech.   Engrg. {\bf 197} (2008) 2701-2718.

%\bibitem{frutos} {\sc J. de Frutos, B. Garc\'{\i}a-Archilla, J. Novo}{\it Local Error %Estimates for the SUPG Method Applied to Evolutionary Convection Reaction %Diffusion Equations.} Journal of Scientific Computing. {\bf 66} (2016) 528--554.

\bibitem{Hughes_1995}
{\sc T. J. R. Hughes},
\textit{Multiscale phenomena: Green's function, the Dirichlet-to-Neumann map, subgrid scale models, bubbles and the origins of stabilized methods,}
Comput. Methods Appl. Mech. Engrg. \textbf{127} (1995) 387--401.

\bibitem{HughesStewart_1995}
{\sc T. J. R. Hughes, J. R. Stewart,} \textit{A space-time formulation for multiscale phenomena,}
Comput. Methods Appl. Mech. Engrg. \textbf{74} (1995) 217--229.

\bibitem{HughesFMQ_1998}
{\sc T. J. R. Hughes, G. R. Feijoo, L. Mazzei, J. B. Quincy,}
\textit{The variational multiscale method: a paradigm for computational mechanics,}
Comput. Methods Appl. Mech. Engrg. \textbf{166} (1998) 3--24.

\bibitem{hug112} {\sc T. J. R. Hughes, L. Mazzei, K. E. Jansen,}
{\it Large eddy simulation and the variational multiscale method. }
Comput. Vis. Sci. \textbf{3} (2000) 47--59.

\bibitem{john0612} {\sc V. John}, {\it On large eddy simulation and variational multiscale methods in the numerical simulation of turbulent incompressible flows. }
Applications of Mathematics. \textbf{51} (2006) 321--353.

%\bibitem{johnpdenum} {\sc V. John}, {\it Numerical Methods for Partial %Differential Equations. Course WIAS, Chapter 6 }, 2013.

\bibitem{johnnovo} {\sc V. John, J. Novo} {\it Error Analysis of the SUPG Finite Element Discretization of Evolutionary Convection-Diffusion-Reaction Equations.} SIAM J. Numer. Anal. {\bf 49} (2011) 1149--1176.

%\bibitem{stamp} {\sc G. Stampacchia}, {\it Le probl\`eme de Dirichlet pour les %\'equations elliptiques du second ordre \`a coefficients discontinus.} Annales de %l'Insitut Fourier  tome 15, n$^o$ 1.(1965)  189-257.

%\bibitem{libroharmonic} {\sc Y. Katznelson},
%{\it An introduction to harmonic analysis},  Cambridge Mathematical Library, 2004.

%
\end{thebibliography}
\end{document}